\documentclass[reqno,10p]{amsproc}
\usepackage[utf8]{inputenc}
\usepackage{amssymb}
\usepackage{euscript}
\usepackage{extarrows}
\usepackage{mathrsfs}
\usepackage{a4wide}
\usepackage{setspace}
\usepackage{tikz}
\usetikzlibrary{positioning}
\usepackage{extarrows}
\usetikzlibrary{arrows.meta}

\newtheorem{thm}{Theorem}%[section]

\newtheorem{lem}[thm]{Lemma}
\newtheorem{pro}[thm]{Proposition}

\theoremstyle{remark}
\newtheorem{rem}[thm]{Remark}

\theoremstyle{definition}
\newtheorem{exa}[thm]{Example}
\newtheorem{dfn}[thm]{Definition}

\newcommand*{\card}[1]{\mathrm{card}(#1)}

\DeclareMathOperator{\D}{d\hspace{-0.25ex}}

\DeclareMathOperator{\paa}{{\mathsf{par}}}

\newcommand*{\ascr}{\mathscr A}

\newcommand*{\borel}[1]{{\mathfrak B}(#1)}
\newcommand*{\bscr}{\mathscr B}
\newcommand*{\cbb}{\mathbb C}

\newcommand*{\cfw}{C_{\phi,w}}
\newcommand*{\cfwn}[1]{C_{\phi^{#1},w_{#1}}}

\newcommand*{\cfrak}[1]{{\mathfrak{C}}_{#1}}

\newcommand*{\dz}[1]{{\EuScript D}(#1)}

\newcommand*{\esf}{\mathsf{E}}

\newcommand*{\efw}{\mathsf{E}_{\phi,w}}
\newcommand*{\efwn}[1]{{\mathsf E}_{#1}}

\newcommand*{\Ge}{\geqslant}

\newcommand*{\hh}{\mathcal H}

\newcommand*{\hsf}{{\mathsf h}}

\newcommand*{\hfw}{{\mathsf h}_{\phi,w}}

\newcommand*{\hsfn}[1]{{\mathsf h}_{#1}}

\newcommand*{\jd}[1]{\EuScript N(#1)}

\newcommand*{\Le}{\leqslant}

\newcommand*{\mno}[1]{{\mathsf M}_{#1}}
\newcommand*{\mcal}[1]{{\mathscr#1}}

\newcommand*{\nbb}{\mathbb N}

\newcommand*{\ogr}[1]{\boldsymbol B(#1)}
\newcommand*{\ob}[1]{{\EuScript R}(#1)}

\newcommand*{\opr}[1]{\boldsymbol L(#1)}

\newcommand*{\psf}{\mathsf{P}}

\newcommand*{\pfwn}[1]{{\mathsf{P}}_{{#1}}}

\newcommand*{\rbb}{\mathbb R}
\newcommand*{\rbop}{{\overline{\rbb}_+}}

\newcommand*{\smalloplus}{\raise0pt\hbox{$\scriptscriptstyle \oplus$}}

\newcommand*{\zbb}{\mathbb Z}

\begin{document}
\setstretch{1.2}
\title[Spectrally $n$-centered operators]{Spectrally $n$-centered operators. I}

\author[P.\ Budzy\'{n}ski]{Piotr Budzy\'{n}ski}
\address{Katedra Zastosowa\'{n} Matematyki, Uniwersytet Rolniczy w Krakowie, ul.\ Balicka 253c, 30-198 Kra\-k\'ow, Poland}
\email{piotr.budzynski@urk.edu.pl}

\date{\today}
\keywords{weighted composition operator, centered operator, half-centered operator, binormal operator, spectrally centered operator, spectrally $n$-centered operator}
\subjclass[2020]{Primary 47B37; Secondary 47A15, 47B20, 47B33}

\begin{abstract}
We introduce the concept of spectrally $n$-centered operators and study it within the framework of unbounded weighted composition operators in $L^2$-spaces. Based on this, we establish a characterisation of spectrally centered unbounded weighted composition operators, generalising Giselsson's criterion to the unbounded regime. Finally, we analyse various associated conditions, highlighting the delicate measure-theoretic anomalies.
\end{abstract}
\maketitle

\section{Introduction}

The study of structural properties of Hilbert space operators based on commutativity has long been driven by the spectral theorem for normal operators. An interesting point in this endeavor was the introduction of \emph{centered operators} by Morrel and Muhly in 1974 \cite{1974-sm-morrel-muhly}. A bounded operator $T$ on a Hilbert space $\hh$ is {\em centered} if the family of positive self-adjoint operators 
$$\{T^k T^{k*}, T^{m*} T^m \colon k, m \in \zbb_+\}$$ 
is mutually commutative. This condition effectively forces a centered operator to decompose into a direct sum of shifts and normal operators. The concept is related to \emph{weak centeredness} of operators\footnote{Weakly centered operators are also known as binormal operators.} (studied initially by Embry \cite{1966-pjm-embry, 1970-pjm-embry} and Campbell \cite{1972-pams-campbell, 1974-pjm-campbell}, and later by Ito, Yamazaki, Yanagida \cite{2004-ieot-ito-yamazaki-yanagida, 2004-jot-ito-yamazaki-yanagida}, and Paulsen, Pearcy, Petrović \cite{1992-jfa-petrovic, 1995-jfa-paulsen-pearcy-petrovic}), which only require commutativity of $T^*T$ and $TT^*$.

As shown independently by Campbell and Morrel (see \cite{1972-pams-campbell, 1971-cmbs-morrel}), every weakly centered hyponormal operator possesses a nontrivial invariant subspace. On the other hand, if an operator is centered, the existence of an invariant subspace follows from the Morrel--Muhly model without any additional assumptions. This naturally raises a question of whether there is a property between weak centeredness (combined with hyponormalty) and ``full'' centeredness that yields existence of an invariant subspace. In this paper, we bridge this gap by introducing and investigating {\em spectrally $n$-centered} operators. This notion is related to $n$-centeredness studied by Liu, Luo, Xu, and Zhang (see \cite{2019-bjma-liu-luo-xu, 2023-bjma-liu-xu-zhang})

While the concepts of centeredness and weak centeredness seem to be well-understood for bounded operators, extending them to the unbounded regime introduces difficulties. For unbounded operators, products such as $T^k T^{k*}$ and $T^{m*} T^m$ may have significantly smaller domain than the original operator $T$, rendering standard algebraic commutativity insufficient for obtaining interesting results. The remedy is to transition to \emph{spectral commutativity}. Instead of multiplying the operators directly, one requires the spectral measures of their moduli to commute. Such approach led to the recent investigation of \emph{spectrally weakly centered} and \emph{spectrally half-centered} unbounded weighted composition operators in $L^2$-spaces (see \cite{2025-arxiv-budzynski, 2026-arxiv-budzynski-01}). We use the same approach in the following paper. Unbounded weakly centered operators were also studied in \cite{2026-cherif-cherifa-mortad}, mostly by means of pointwise commutativity.

We carry our study within the framework of weighted composition operators in $L^2$-spaces. These operators, defined by 
$$\cfw f = w \cdot (f \circ \phi),$$ 
allow us to explicitly characterize abstract operator-theoretic properties in terms of Radon-Nikodym derivatives $\hsfn{k}$ and conditional expectations $\efwn{k}$. They form an excellent working ground for testing hypothesis, constructing examples, etc. We use them in hope to stimulate further research in the context of genereal Hilbert space operators. Theory of bounded and unbounded weighted composition operators in $L^2$-spaces has seen a significant development in recent years (see e.g., \cite{2014-ampa-budzynski-jablonski-jung-stochel, 2015-jfa-budzynski-jablonski-jung-stochel, 2018-lnim-budzynski-jablonski-jung-stochel, 2017-aim-budzynski-jablonski-jung-stochel, 2020-mn-benhida-budzynski-trepkowski})

Till the publication of \cite{2018-lnim-budzynski-jablonski-jung-stochel}, it was quite a common practice to treat weighted composition operators in $L^2$-spaces as if they all were products of multiplication operators and composition operators $\cfw = M_w C_\phi$, which, as detailed in \cite[Chapter 7]{2018-lnim-budzynski-jablonski-jung-stochel}, does not hold in general. Consequently, characterising various properties of weighted composition operators in the language of the Radon-Nikodym derivative $\hsf_\phi$ associated to $C_\phi$ leads to essential lack of generality. In particular, this can be said about the characterisation of centered weighted composition operators obtained in \cite{2018-mb-jabbarzadeh-bakhshkandi}. In this paper, we avoid any a priori restrictions related to the existence of $C_\phi$ and study weighted composition operators in $L^2$-spaces in full generality.

An underlying theme of this paper is the delicate contrast between the bounded and unbounded settings. As we demonstrate, algebraic or geometric conditions that perfectly characterize bounded centered weighted composition operators may fail for unbounded ones. 

Our main results are contained in Sections 4 and 5 of the paper. In Section 4, we characterize spectrally $n$-centered weighted composition operators. We show that for a fixed $n$, the equivalent condition is the commutativity of the range projections $\pfwn{k}$ with the modulus operators $\mno{l}$, which translates to the $\phi^{-k}(\ascr)$-measurability of the sequence of Radon-Nikodym derivatives $\hsf_l$ up to index $n$.  In Section 5, we dissect the relationship between spectral $n$-centeredness and other related conditions, revealing the measure-theoretic subtleties that distinguish the general theory from its bounded counterpart.

\section{Preliminaries}
We write $\zbb$, $\rbb$, and $\cbb$ for the sets of integers, real numbers, and complex numbers, respectively. By $\nbb$, $\zbb_+$, and $\rbb_+$ we denote the sets of positive integers, nonnegative integers, and nonnegative real numbers, respectively. $\rbop$ stands for $\rbb_+ \cup \{\infty\}$; $\borel{\cbb}$ and $\borel{\rbb}$ denote the $\sigma$-algebra of Borel subsets of $\cbb$ and $\rbb$, respectively. Given $n\in\nbb$, $J_n=\{1, 2, \ldots, n\}$, while $J_\infty=\nbb$.

Let $\hh$ be a (complex) Hilbert space. $\ogr{\hh}$ stands for a $\mathcal{C}^*$-algebra of bounded operators on $\hh$, while $\opr{\hh}$ stands for the linear space of (possibly unbounded) linear operators in $\hh$. Given $n\in\nbb\cup\{\infty\}$, we set
 \begin{align*}
{\mathfrak{C}}_{n}(\hh)=\big\{ A\in \opr{\hh}\colon \text{$A^k$ is closed and densely defined for } k\in J_n\big\}.
 \end{align*}
If no confusion arises, we drop the dependence on $\hh$ and write ${\mathfrak{C}}_{n}$ instead of ${\mathfrak{C}}_{n}(\hh)$.
 
Let $A\in \opr{\hh}$. Then $\dz{A}$, $\jd{A}$, $\ob{A}$, and $A^*$ stand for the domain, the kernel, the range, and the adjoint of $A$, respectively. A closed subspace $\mathcal{M}$ of $\hh$ is {\em invariant} for $A$ if  $A\big(\mathcal{M}\cap\dz{A}\big)\subseteq \mathcal{M}$ or, equivalently $P_\mathcal{M}AP_\mathcal{M}=AP_\mathcal{M}$, where $P_\mathcal{M}$ is the orthogonal projection onto $\mathcal{M}$. If $P_\mathcal{M} \dz{A}\subseteq \dz{A}$ and both $\mathcal{M}$ and $\mathcal{M}^\perp$ are invariant for $A$, then $\mathcal{M}$ is said to be {\em reducing} for $A$; this is equivalent to the following 
\begin{align*}%\label{zabierzow05}
P_{\mathcal{M}}A\subseteq AP_{\mathcal{M}}.  
\end{align*} 
%Set
%\begin{align*}
%\dzn{n}{A}=\bigcap_{k=1}^n \dz{A^k},\quad n\in\nbb\cup\{\infty\}.
%\end{align*}
%Members of $\dzn{\infty}{A}$ are called {\em $C^\infty$-vectors} of $A$. 
If $A$ is closed and densely defined, there exists a (unique) partial isometry $U$ such that $\jd{U}=\jd{A}$ and $A=U|A|$, where $|A|$ is the square root of $A^*A$; the operator $U$ is called the {\em phase} of $A$, $|A|$ is the {\em modulus} of $A$, and $A=U|A|$ is the {\em polar decomposition} of $A$. 

Let $A\in\opr{\hh}$ be selfadjoint and $E_A$ be its spectral measure. We say that $B \in \ogr{\hh}$ {\em spectrally commutes} with $A$ if and only if
\begin{align*}
B E_A(\sigma) = E_A(\sigma) B,\quad \sigma \in \borel{\rbb}. 
\end{align*}
It is known (see, e.g., \cite[Proposition 5.15]{schmudgen}) that $B$ spectrally commutes with $A$ if and only if $B A \subseteq A B$. Furthermore, two selfadjoint operators $A_1$ and $A_2$ in $\hh$ are said to {\em spectrally commute} if and only if their spectral measures mutually commute, i.e., 
\begin{align*}
E_{A_1}(\sigma) E_{A_2}(\omega) = E_{A_2}(\omega) E_{A_1}(\sigma),\quad \sigma, \omega \in \borel{\rbb}
\end{align*}
or, equivalently, $E_{|A_1|}(\sigma) E_{|A_2|}(\omega) = E_{|A_2|}(\omega) E_{|A_1|}(\sigma)$ for all $\sigma, \omega \in \borel{\rbb}$.

Throughout the paper we assume that $(X,\ascr, \mu)$ is a $\sigma$-finite measure space, $\phi$ is an $\ascr$-measurable {\em transformation} of $X$ (i.e., an $\ascr$-measurable mapping $\phi\colon X\to X$) and $w\colon X\to\cbb$ is $\ascr$-measurable.

Let $\mu_w$ and $\mu_w\circ\phi^{-1}$ be measures on $\ascr$ defined by $\mu_w(\sigma)=\int_\sigma|w|^2\D\mu$ and $\mu_w\circ \phi^{-1}(\sigma)=\mu_w\big(\phi^{-1}(\sigma)\big)$. Assume that $\mu_w\circ\phi^{-1}$ is absolutely continuous with respect to  $\mu$. Then the operator 
\begin{align*}
\cfw \colon L^2(\mu) \supseteq \dz{\cfw} \to L^2(\mu)    
\end{align*}
given by
\begin{align*}
\dz{\cfw} = \{f \in L^2(\mu) \colon w \cdot (f\circ \phi) \in L^2(\mu)\},\quad
\cfw f  = w \cdot (f\circ \phi), \quad f \in \dz{\cfw},
\end{align*}
is well defined (see \cite[Proposition 7]{2018-lnim-budzynski-jablonski-jung-stochel}); here, as usual, $L^2(\mu)$ stands for the complex Hilbert space of all square $\mu$-integrable $\ascr$-measurable complex functions on $X$ (with the standard $L^2$ inner product). $\cfw$ is called a {\em weighted composition operator} (abbreviated to {\em wco} throughout the paper). By the Radon-Nikodym theorem (cf.\ \cite[Theorem 2.2.1]{ash}), there exists a unique (up to a set of $\mu$-measure zero) $\ascr$-measurable function $\hfw\colon X \to \rbop$ such that
 \begin{align*}
\mu_w \circ \phi^{-1}(\varDelta) = \int_{\varDelta} \hfw \D \mu, \quad
\varDelta \in \ascr.
\end{align*}
As it follows from \cite[Theorem 1.6.12]{ash} and \cite[Theorem 1.29]{rud}, for every $\ascr$-measurable function $f \colon X \to \rbop$ (or for every $\ascr$-measurable function $f\colon X \to \cbb$ such that $f\circ \phi \in L^1(\mu_w)$), we have
\begin{align*}
\int_X f \circ \phi \D\mu_w = \int_X f \, \hfw \D \mu.
\end{align*}
In particular, this implies that $\dz{\cfw}=L^2\big((1+\hfw)\D\mu\big)$ (see \cite[Proposition 10]{2018-lnim-budzynski-jablonski-jung-stochel}). Therefore, 
\begin{align*}%\label{zdolny01}
\text{$\cfw$ is densely defined if and only if $\hfw < \infty$ a.e. $[\mu]$.}
\end{align*}
Furthermore, the boundedness of $\cfw$ is fully characterized by its Radon-Nikodym derivative
\begin{align}\label{boundedness}
\text{$\cfw \in \ogr{L^2(\mu)}$ if and only if } \hfw \in L^\infty(\mu).
\end{align}

Assume that $\cfw$ is densely defined. Then, for a given $\ascr$-measurable function $f\colon X\to \rbop$ (or $f\colon X\to \cbb$ such that $f\in L^p(\mu_w)$ with some $1\leqslant p<\infty$), one may consider $\efw(f)$, the conditional expectation of $f$ with respect to the $\sigma$-algebra $\phi^{-1}(\ascr)$ and the measure $\mu_w$ (see \cite[Section 2.4]{2018-lnim-budzynski-jablonski-jung-stochel}). Also, there is a unique (up to sets of $\mu$-measure zero) $\ascr$-measurable function $g$ on $X$ such that $g=0$ a.e. $[\mu]$ on $\{\hfw=0\}$ and $\efw(f)=g\circ\phi$ a.e. $[\mu_w]$. We will denote this function by $\efw(f)\circ\phi^{-1}$. Recall that $\efw$ can be regarded as a linear contraction on $L^p(\mu_w)$, $p\in[1,\infty]$, which leaves invariant the convex cone $L^p_+(\mu_w)$ of all $\rbb_+$-valued members of $L^2(\mu_w)$; moreover, $\efw$ is an orthogonal projection on $L^2(\mu_w)$.

Considering $w\equiv 1$ leads to a large subclass of weighted composition operators, the so-called composition operators. Namely, assuming that $\phi\colon X\to X$ is $\ascr$-measurable and satisfies $\mu\circ\phi^{-1}\ll\mu$, the operator $C_{\phi, 1}$ is well defined. We call it the {\em composition operator} induced by $\phi$; for brevity we use the notation $C_\phi:=C_{\phi,1}$. The corresponding Radon-Nikodym derivative $\mathsf{h}_{\phi,1}$ and the conditional expectation $\esf_{\phi,1}$ (see the definition below) are denoted by $\mathsf{h}_\phi$ and $\esf_\phi$, respectively. We caution the reader that $C_\phi$ might not be well defined even if $\cfw$ is an isometry. In particular, if this is the case, $\hsf_\phi$ and $\esf_{\phi}$ do not exist whereas $\hfw$ and $\efw$ do (see \cite[pg. 71]{2018-lnim-budzynski-jablonski-jung-stochel}).

Given an $\ascr$-measurable function $g\colon X\to \cbb$, $M_g$ denotes the operator of multiplication by $g$ acting in $L^2(\mu)$. It is clear, that $M_g=\cfw$ with $\phi=\mathrm{id},$ the identity transformation of $X$, and $w=g$

For any $\ascr$-measurable transformation $\psi$ and any $\ascr$-measurable function $v\colon X\to\cbb$, we define the cocycle products of $v$ along $\psi$ as
\begin{align}\label{jana01}
    p_{\psi, v}^{[0]} = \chi_X \quad \text{and}\quad p_{\psi, v}^{[n]} = \prod_{j=0}^{n-1} v\circ\psi^j \text{ for }n\in\nbb.
\end{align}
In particular, $p_{\phi,w}^{[1]}=w$. It is known (see \cite[Lemma 26]{2018-lnim-budzynski-jablonski-jung-stochel}) that for every $n\in\nbb$, $C_{\phi^n, p_{\phi,w}^{[n]}}$ is well defined and $\cfw^n\subseteq C_{\phi^n, p_{\phi,w}^{[n]}}$. 
For brevity, whenever this leads to no confusion, we use the following 
\begin{align*}%\label{trs01}
    \mu_n = \mu_{p_{\phi,w}^{[n]}}, \quad \hsf_n=\hsf_{\phi^n, p_{\phi,w}^{[n]}},\quad\esf_n=\esf_{\phi^n, p_{\phi,w}^{[n]}},\quad n\in\nbb
\end{align*}
(recall that $\esf_{n}$ exists whenever $\hsf_n<\infty$ a.e. $[\mu]$) and
\begin{equation*}%\label{iggi}
\begin{aligned}
   \mno{n} f&= \hsf_n f,\quad f\in L^2\big((1+\hsf_n^2)\D\mu\big),\\
\pfwn{n} f&= p_{\phi,w}^{[n]} \esf_{n}\big(f_{p_{\phi,w}^{[n]}}\big),\quad f\in L^2(\mu),
\end{aligned} \qquad \quad n\in\nbb,
\end{equation*}
where $f_{v}=\chi_{\{v\neq 0\}}\frac{f}{v}$. Also, we often (especially in proofs) write $w_k = p_{\phi,w}^{[k]}$ for $k \in \nbb$. It is known (see \cite[Lemma 4.2]{2020-mn-benhida-budzynski-trepkowski}) that $\pfwn{n}$ is an orthogonal projection onto $\overline{\ob{\cfw^n}}$. If $C_{\psi, v}$ is densely defined, we define
\begin{align*}
    u_{\psi, v} = \frac{v}{\sqrt{\hsf_{\psi, v} \circ \psi}}.
\end{align*}
In particular, $u_{\phi, w} = \frac{w}{\sqrt{\hsfn{1}\circ\phi}}$. In view of \cite[Theorem 18]{2018-lnim-budzynski-jablonski-jung-stochel}, assuming $\cfw$ is densely defined,
\begin{align*}%\label{zgon01}
    \cfw= C_{\phi,u_{\phi,w}} M_{\sqrt{\hsfn{1}}}
\end{align*}
is the polar decomposition of $\cfw$; in other words, the phase of $\cfw$ is equal to $C_{\phi,u_{\phi,w}}$ and the modulus of $\cfw$ is equal to $M_{\sqrt{\hsfn{1}}}$. The $n$-th power of the phase of $\cfw$ is a composition operator with symbol $\phi^n$ and weight $p_{\phi, u_{\phi,w}}^{[n]}$ (see. \cite[Lem. 26 and 44]{2018-lnim-budzynski-jablonski-jung-stochel}), which is given by
\begin{align*}%\label{djo01}
    p_{\phi, u_{\phi,w}}^{[n]} = \prod_{j=0}^{n-1} u_{\phi,w}\circ\phi^j = \frac{w}{\sqrt{\hsfn{1}\circ\phi}}\cdot\frac{w\circ \phi}{\sqrt{\hsfn{1}\circ \phi^2}}\cdots\frac{w\circ\phi^{n-1}}{\sqrt{\hsfn{1}\circ\phi^n}},\quad n\in\nbb.
\end{align*}
Conversely, the phase of the $n$-th power $\cfw^n$, assuming $\cfw^n$ is densely defined and closed, has the symbol $\phi^n$ and the weight
\begin{align*}%\label{djo02}
    u_{\phi^n, p_{\phi,w}^{[n]}} = \frac{p_{\phi,w}^{[n]}}{\sqrt{\hsfn{n}\circ\phi^n}},\quad n\in\nbb.
\end{align*}
We alert the reader that our notation departs from \cite{2018-lnim-budzynski-jablonski-jung-stochel} to accommodate varying transformations and weights.

\section{Spectrally $n$-centered wco's -- an invitation}

We begin our considerations with a simple example of a composition operator over a discrete measure space. It leads us to properties filling the gap between weak centeredness and centeredness and being related to invariant subspace problem. That the operator posses a nontrivial invariant space is easily seen to be true and requires no reference to centeredness or hyponormality. Nevertheless, we find considering it from a centeredness perspective fruitful. The example and the result that follows (see Proposition \ref{abs_inv}) are based on the same idea that is behind the Campbell's criterion \cite[Th. 2]{1972-pams-campbell}.

\begin{exa}\label{inv_tree}
Let $X$ be countable, $\ascr=2^X$, and $\mu$ be the counting measure. Let $\phi$ be a (measurable) transformation of $X$ such that $\kappa(x) = \card{\phi^{-1}(\{x\})}$, the {\em valency} of $x$, is bounded on $X$. Finally, let $C_\phi$ be the composition operator induced by $\phi$. In view of \cite[Prop. 79]{2018-lnim-budzynski-jablonski-jung-stochel} and \eqref{boundedness}, $C_\phi$ is a well defined bounded operator; moreover, 
\begin{align*}
\hsf_{\phi}(x)=\kappa(x),\quad x\in X.
\end{align*}
Let $\{e_x\}_{x \in X}$ be the standard orthonormal basis of $L^2(\mu)$. Using \cite[Th. 18]{2018-lnim-budzynski-jablonski-jung-stochel} we get
\begin{align*}
C_\phi^* C_\phi e_x = \kappa(x) e_x,\quad C_\phi C_\phi^* e_x = \sum_{y \in \phi^{-1}(\{\phi(x)\})} e_y,\quad x\in X.
\end{align*}
The operator $C_\phi$ is weakly centered  if and only if the following holds (see \cite[Th. 1]{2025-arxiv-budzynski})
\begin{align}\label{weak-1}
\text{the valency $\kappa$ is constant on sets of the form $\phi^{-1}(\{x\})$, $x\in X$.}
\end{align}
Assuming that \eqref{weak-1} holds, we may denote by $c_x$ the value of valency on $\phi^{-1}(\{x\})$. Suppose that $C_\phi$ is hyponormal, i.e., $C_\phi^* C_\phi \Ge C_\phi C_\phi^*$. This, in view of \cite[Th. 53]{2018-lnim-budzynski-jablonski-jung-stochel}, implies 
\begin{align}\label{hyp}
c_x \Ge \kappa(x),\quad x \in \phi(X). 
\end{align}
For a given $\lambda > 0$, we define
\begin{align}\label{minv}
    \mcal{M}_\lambda = \bigvee \{e_x \colon \kappa(x) \Ge \lambda\},
\end{align}
the closure of the linear span of $\{e_x \colon \kappa(x) \Ge \lambda\}$. In view of \eqref{hyp}, if $x\in X$ satisfies $\kappa(x) \Ge \lambda$, then every $y \in \phi^{-1}(\{x\})$ satisfies $\kappa(y) = c_x \Ge \lambda$. This means that $\mcal{M}_\lambda$ is invariant under $C_\phi$. If $C_\phi^* C_\phi$ is not a scalar multiple of the identity (i.e., the valency $\kappa$ is not globally constant), there is $\lambda > 0$ such that $\varnothing \neq \{x \in X \colon \kappa(x) \Ge \lambda\}\neq X$. For such a $\lambda$, the subspace $\mcal{M}_\lambda $ is nontrivial. If the valency $\kappa$ is constant, then $C_\phi$ is (up to a constant) an isometry, which means it has plenty of invariant subspaces.

In view of \cite[Th. 6]{2026-arxiv-budzynski-01}, $C_\phi$ is centered if and only if 
\begin{align}\label{cent}
\text{the valency $\kappa$ is constant on sets of the form $\phi^{-k}(\{x\})$, $x\in X$, $k\in\nbb$.}
\end{align}
A natural way of weakening the above condition is to assume that for a fixed $n\in\nbb\setminus\{1\}$, $C_\phi$ satisfies
\begin{align}\label{weak-n}
\text{the valency $\kappa$ is constant on sets of the form $\phi^{-k}(\{x\})$, $x\in X$, $k\in J_n$}.
\end{align}
Clearly, \eqref{cent} implies \eqref{weak-n}, and \eqref{weak-n} implies \eqref{weak-1}. In particular, $C_\phi$ is weakly centered. Let $c_x^{(n)}$ denote the constant value of a valency $\kappa$ on $\phi^{-n}(\{x\})$. Assume that
\begin{align}\label{hyp-n}
    c_x^{(n)} \Ge \kappa(x), \quad x \in \phi^n(X).
\end{align}
One can see that \eqref{hyp} implies \eqref{hyp-n}. Indeed, let us assume that $c_v \Ge \kappa(v)$ holds for every $v \in \phi(X)$. For a fixed $x \in \phi^n(X)$ we choose arbitrary $z \in \phi^{-n}(\{x\})$. Set $x = x_0$, $x_n = z$, and define recursively $x_{k-1}=\phi(x_k)$ for $1 \Le k \Le n$. Applying \eqref{hyp} yields
\begin{align*}
    \kappa(z)=\kappa(x_n) \Ge \kappa(x_{n-1}) \Ge \dots \Ge \kappa(x_1) \Ge \kappa(x_0)=\kappa(x).
\end{align*}
Consequently, under the assumption of \eqref{weak-n}, we have $c_x^{(n)} \Ge \kappa(x)$, which proves that \eqref{hyp} implies \eqref{hyp-n}. It can easily be shown that the other implication is not true in general. The subspace $\mcal{M}_\lambda$, defined in \eqref{minv}, need not to be invariant under $C_\phi$. However, it is invariant under $C_\phi^n$. Consequently, 
\begin{align}\label{rm00}
    \mcal{M}= \bigvee_{k=1}^{n-1}C_\phi^k\mcal{M}_\lambda
\end{align}
is an invariant subspace for $C_\phi$.

The space $\mcal{M}$ defined in \eqref{rm00} may be trivial even for $n=2$. Indeed, let $X$ be the set of vertexes of a rootless directed tree, partitioned into generations $X_m$, $m \in \zbb$. Assume that the the tree hast the following property: {\em every vertex in $X_{2m}$ has exactly one child and every vertex in $X_{2m+1}$ has exactly two children, $m\in\zbb$}. In other words, the valency is given by
\begin{align*}
    \kappa(x) = 
    \begin{cases} 
        1, & x \in X_{2m}, \\ 
        2, & x \in X_{2m+1}. 
    \end{cases}
\end{align*}
Let $\phi$ be given by the $\paa$ parent function on the tree. Clearly, \eqref{weak-n} and \eqref{hyp-n} with $n=2$ are satisfied. We see that
\begin{align*}
    \mcal{M}_2 &= \bigvee \{e_x \colon \kappa(x) \Ge 2\} = \bigvee_{m \in \zbb} \{e_x \colon x \in X_{2m+1}\},\\
    C_\phi \mcal{M}_2 &= \bigvee_{m \in \zbb} \{e_y \colon y \in X_{2m}\}.
\end{align*}
Consequently, $\mcal{M} = \mcal{M}_2 \vee C_\phi \mcal{M}_2= L^2(\mu)$. 

To ensure that $\mcal{M}$ is not trivial, one can assume e.g., that there exist $\lambda > 0$ and $y_0 \in X$ such that 
\begin{align*}
    \{x \in X \colon \kappa(x) \Ge \lambda\}\neq \varnothing\quad \text{and}\quad \max_{0 \Le k < n} \kappa(\phi^k(y_0)) < \lambda.
\end{align*}
Under this condition, $e_{y_0}$ is orthogonal to $\mcal{M}$, guaranteeing that $\mcal{M} \neq L^2(\mu)$.

As pointed out earlier, even though $\mcal{M}$ may fail to be nontrivial, the operator $C_\phi$ still possesses nontrivial invariant subspaces. For example, for any fixed $k \in \zbb$, the subspace
\begin{align*}
    \mcal{H}_k = \bigvee \{ e_x \colon x \in X_m, m \Ge k \}
\end{align*}
is such a subspace, since $C_\phi \mcal{H}_k \subseteq \mcal{H}_{k+1} \subsetneq \mcal{H}_k$. More generally, one can easily prove that if the graph of $\phi$ is a directed tree (or contains a directed tree as a subtree), then any subspace generated by vectors of the standard orthonormal basis attached to a proper subtree is invariant.

\end{exa}

The conditions \eqref{weak-n} and \eqref{hyp-n} of Example \ref{inv_tree} have natural operator-theoretic counterparts. Indeed, \eqref{weak-n} can be written for a general $\cfw\in\ogr{L^2(\mu)}$ as
\begin{align*}
\hsfn{1}=\efwn{n}(\hsfn{1})\quad \text{a.e. }[\mu_n],
\end{align*}
which is equivalent to commutativity of $\cfw^*\cfw$ and the projection onto $\overline{\ob{\cfw^n}}$ (cf. Lemma \ref{gary01}). This, in turn, can be rewritten as commutativity of $\cfw^*\cfw$ with $|\cfw^n|$ and $V_n \cfw^*\cfw V_n^*$, where $V_n$ is the phase of $\cfw^n$. On the other hand, \eqref{hyp-n} leads to inequality $\cfw^*\cfw\geqslant V_n \cfw^*\cfw V_n^*$.

\begin{pro}\label{abs_inv}
Let $n\in\nbb$, $\lambda > 0$, and $T \in \mathcal{B}(\mathcal{H})$. Let $V_n$ denotes the phase of $T^n$. Set $B = T^*T$ and $C_n = V_n B V_n^*$. Assume that $T$ satisfies the following conditions:
\begin{enumerate}
    \item[(i)] $B$ commutes with $|T^n|$ and $C_n$,
    \item[(ii)] $B \geqslant C_n$.
\end{enumerate}
Let $\mcal{M}_{\text{inv}} = \bigvee_{k=0}^{n-1} T^k \mcal{M}_\lambda$, where $\mcal{M}_\lambda = \ob{E_B([\lambda, \infty))}$. Then $\mcal{M}_{\text{inv}}$ is invariant under $T$.
\end{pro}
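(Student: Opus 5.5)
The plan is to reduce everything to the single inclusion $T^n\mcal{M}_\lambda\subseteq\mcal{M}_\lambda$.

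\emph{Reduction.} Write $P=E_B([\lambda,\infty))$, so that $\mcal{M}_\lambda=\ob{P}$. Suppose $T^n\mcal{M}_\lambda\subseteq\mcal{M}_\lambda$. Then for $0\Le k\Le n-2$ we have $T(T^k\mcal{M}_\lambda)=T^{k+1}\mcal{M}_\lambda\subseteq\mcal{M}_{\text{inv}}$. Moreover $T(T^{n-1}\mcal{M}_\lambda)=T^n\mcal{M}_\lambda\subseteq\mcal{M}_\lambda\subseteq\mcal{M}_{\text{inv}}$. Since $T$ is bounded, it maps the closed linear span of these sets into $\mcal{M}_{\text{inv}}$, so $\mcal{M}_{\text{inv}}$ is invariant. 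To get the inclusion, I use the polar decomposition $T^n=V_n|T^n|$ and treat the two factors separately.

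\emph{The modulus factor.} Since $B$ commutes with $|T^n|$, the spectral projections of $B$ and $|T^n|$ commute. In particular $P$ commutes with $|T^n|$, so $|T^n|\mcal{M}_\lambda\subseteq\mcal{M}_\lambda$. Let $Q=V_n^*V_n=E_{|T^n|}((0,\infty))$ be the initial projection of $V_n$. Then $Q$ commutes with $B$ and with $P$. Hence $|T^n|\mcal{M}_\lambda\subseteq \mcal{M}_\lambda\cap\ob{Q}$, and it suffices to show $V_n x\in\mcal{M}_\lambda$ for every $x\in\ob{P}\cap\ob{Q}$.

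\emph{The phase factor (intertwining).} Using $BQ=QB$, I compute $C_nV_n=V_nBV_n^*V_n=V_nBQ=V_nQB=V_nB$ on all of $\hh$. So $V_n$ intertwines $B$ with $C_n$. By the functional calculus for bounded selfadjoint operators, $V_n$ then intertwines their spectral projections: $E_{C_n}(\sigma)V_n=V_nE_B(\sigma)$, at least for Borel sets $\sigma$ with $0\notin\sigma$. This restriction handles the fact that $C_n$ vanishes on $\ob{V_n}^\perp$. One can verify it by passing to polynomials and then to weak limits, or by noting that $V_n$ restricted to $\ob{Q}$ is a unitary from $\ob{Q}$ onto $\ob{V_n}$ implementing the equivalence of $B|_{\ob{Q}}$ and $C_n|_{\ob{V_n}}$. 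Taking $\sigma=[\lambda,\infty)$ with $\lambda>0$, every $x\in\ob{P}$ gives $V_nx=V_nPx=E_{C_n}([\lambda,\infty))V_nx$, so $V_n x\in\ob{E_{C_n}([\lambda,\infty))}$.

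\emph{Comparison of spectral projections (the main obstacle).} It remains to show $E_{C_n}([\lambda,\infty))\Le E_B([\lambda,\infty))$. This is the only place where hypothesis (ii) and the commutativity of $B$ with $C_n$ enter together, and it is where I expect the real work. Put $R=E_{C_n}([\lambda,\infty))\big(I-E_B([\lambda,\infty))\big)$. Because $B$ and $C_n$ commute, $R$ is an orthogonal projection commuting with both operators. For a unit vector $y\in\ob{R}$, the condition $y\in\ob{E_{C_n}([\lambda,\infty))}$ gives $\langle C_ny,y\rangle\Ge\lambda$. On the other hand, $y\in\ob{E_B([0,\lambda))}$, and a unit vector there satisfies $\langle By,y\rangle<\lambda$: the spectral measure $\langle E_B(\cdot)y,y\rangle$ is a probability measure carried by $[0,\lambda)$. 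Combined with $B\Ge C_n$ this is a contradiction, so $R=0$. Therefore $V_nx\in\mcal{M}_\lambda$, which completes the proof of $T^n\mcal{M}_\lambda\subseteq\mcal{M}_\lambda$.
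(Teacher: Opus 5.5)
Your proof is correct and follows essentially the same route as the paper: reduce to $T^n\mcal{M}_\lambda\subseteq\mcal{M}_\lambda$, use that $B$ commutes with $V_n^*V_n$ to intertwine $B$ and $C_n$, transfer this to the spectral projections for $[\lambda,\infty)$, and finish with $E_{C_n}([\lambda,\infty))\Le E_B([\lambda,\infty))$, which follows from (ii) together with commutativity. The differences are minor. You intertwine through the phase ($C_nV_n=V_nB$) and treat $|T^n|$ separately, whereas the paper uses $C_nT^n=T^nB$ directly; you also prove the projection inequality, which the paper only asserts. Your restriction to $0\notin\sigma$ is harmless but unnecessary, since $C_nV_n=V_nB$ already gives $E_{C_n}(\sigma)V_n=V_nE_B(\sigma)$ for every Borel set $\sigma$.
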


\begin{proof}
We first show that $\mcal{M}_\lambda$ is invariant for $T^n$. The product $V_n^* V_n$ is the orthogonal projection onto $\overline{\ob{|T^n|}}$. Since $B$ commutes with $|T^n|$, it commutes with $V_n^* V_n$ as well. Consequently, we have
\begin{align*}
    C_n T^n &= (V_n B V_n^*) V_n |T^n| = V_n B (V_n^* V_n) |T^n| = V_n (V_n^* V_n) B |T^n| = V_n |T^n| B = T^n B.
\end{align*}
This yields $E_{C_n}(\Delta) T^n = T^n E_B(\Delta)$ for every $\Delta\in\borel{\cbb}$. In particular, we obtain
\begin{align*}
    E_{C_n}([\lambda, \infty)) T^n = T^n E_B([\lambda, \infty)).
\end{align*}
Thus $T^n x = T^n E_B([\lambda, \infty)) x = E_{C_n}([\lambda, \infty)) T^n x$ for every $x\in \mcal{M}_\lambda$, implying
\begin{align}\label{neg}
T^n \mcal{M}_\lambda \in \ob{E_{C_n}([\lambda, \infty))}.
\end{align}
Since $B$ and $C_n$ commute, and $C_n\leq B$, we get $E_{C_n}([\lambda, \infty)) \Le E_B([\lambda, \infty))$. Therefore, 
\begin{align*}
\ob{E_{C_n}([\lambda, \infty))} \subseteq\ob{E_B([\lambda, \infty))}.    
\end{align*}
Combining this and \eqref{neg} we see that $\mcal{M}_\lambda$ is invariant for $T^n$.

That $\mcal{M}_{\text{inv}}$ is invariant for $T$ follows from
\begin{align*}
    T \mcal{M}_{\text{inv}} = T \left( \bigvee_{k=0}^{n-1} T^k \mcal{M}_\lambda \right) \subseteq \bigvee_{k=1}^n T^k \mcal{M}_\lambda = \bigvee_{k=1}^{n-1} T^k \mcal{M}_\lambda \vee T^n \mcal{M}_\lambda,
\end{align*}
and the fact that $T^n \mcal{M}_\lambda \subseteq \mcal{M}_\lambda$. This completes the proof.
\end{proof}

Commutativity of $\cfw^*\cfw$ and the projection onto $\overline{\ob{\cfw^n}}$ is equivalent to commutativity of the spectral measures $E_{\cfw^*\cfw}$ and $E_{\cfw^n\cfw^{n*}}$, thus Proposition \ref{abs_inv} suggests studying this property. Since both weak centeredness and centeredness are closed under taking the adjoints, considering commutativity of $E_{\cfw^{n*}\cfw^n}$ and $E_{\cfw^n\cfw^{n*}}$ seems even more natural.

\begin{dfn}
Let $\hh$ be a Hilbert space, $n\in\nbb$, and $A\in {\mathfrak{C}}_{n}$. For $k\in J_n$, let $E_{1,k}=E_{A^{k*}A^k}$ and $E_{2,k}=E_{A^k A^{k*}}$ be the spectral measures of $A^{k*}A^k$ and $A^k A^{k*}$, respectively. We say that $A$ is {\em spectrally $n$-centered} if and only if
\begin{align*}%\label{zabierzow01}
E_{i,k}(\sigma)E_{j,l}(\omega) = E_{j,l}(\omega)E_{i,k}(\sigma), \quad i,j \in \{1,2\}, \ k,l \in J_n, \ \sigma,\omega \in \borel{\rbb};
\end{align*}
$A$ is said to be {\em spectrally weakly $n$-centered} if and only if
\begin{align*}%\label{lato01}
E_{1,1}(\sigma)E_{2,k}(\omega)=E_{2,k}(\omega)E_{1,1}(\sigma), \quad  k \in J_n,\ \sigma,\omega\in\borel{\rbb}.
\end{align*}
\end{dfn}

For $A\in\ogr{\hh}$, we simply say that $A$ is {\em $n$-centered}; $1$-centeredness is weak centeredness. If $A\in {\mathfrak{C}}_{\infty}$ and $A$ is spectrally $n$-centered for every $n\in\nbb$, then $A$ is called {\em spectrally centered}. For $A\in\ogr{\hh}$, this is equivalent to the centeredness in the sense of Morrel.    

A notion related to centerednes is half-centeredness, introduced in \cite{2018-oam-giselsson} by Giselsson. He showed that a half-centered $T\in\ogr{\hh}$ is centered if and only if $T^{k*}T^k\jd{T^*}\subseteq \jd{T^*}$ (see \cite[Proposition 2.1]{2018-oam-giselsson}). We generalized this concept to the unbounded scenario in \cite{2026-arxiv-budzynski-01} in the following way: $A\in {\mathfrak{C}}_{n}$ is {\em spectrally half-centered} if and only if
\begin{align*}%\label{rzaska01}
E_{1,n}(\sigma)E_{1,m}(\omega)=E_{1,m}(\omega)E_{1,n}(\sigma),\quad \sigma,\omega\in\borel{\cbb},\ n,m \in \nbb.
\end{align*}
Clearly, for $A\in\ogr{\hh}$, $A$ is half-centered if and only if $A$ is spectrally half-centered. Below, we provide a criterion for spectral centeredness inspired by the Giselsson's result.

\begin{pro}
Let $T$ be spectrally centered. Then $\jd{T^*}$ reduces $T^{k*}T^k$ for every $k\in\nbb$.
\end{pro}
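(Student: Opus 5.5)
Since spectral centeredness includes the commutation of the spectral measures of $T^{k*}T^k$ and $TT^*$ (take $i=1$, $j=2$, $l=1$ in the definition, with $n\Ge k$), the proof reduces to a statement about commuting selfadjoint operators. Fix $k\in\nbb$ and put $A=T^{k*}T^k$ and $D=TT^*$. Both are positive selfadjoint operators, because $T\in\cfrak{\infty}$ makes $T^k$ closed and densely defined, so von Neumann's theorem applies. Spectral $k$-centeredness (indeed $n$-centeredness for any $n\Ge k$) gives
\begin{align*}
E_A(\sigma)E_D(\omega)=E_D(\omega)E_A(\sigma),\quad \sigma,\omega\in\borel{\rbb}.
\end{align*}

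The first step is to identify $\jd{T^*}$ spectrally. Since $T^*$ is closed and densely defined, $\jd{TT^*}=\jd{T^*}$. Indeed, if $TT^*x=0$, then $\|T^*x\|^2=\langle TT^*x,x\rangle=0$; this uses $x\in\dz{TT^*}$, and the reverse inclusion is trivial. For a positive selfadjoint operator, the kernel is the range of the spectral projection at zero. Hence the orthogonal projection onto $\jd{T^*}$ is $P:=E_D(\{0\})$.

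The second step is to show that $P$ reduces $A$. Because $P=E_D(\{0\})$ commutes with every $E_A(\sigma)$, $P$ is a bounded operator that spectrally commutes with $A$. By the cited fact \cite[Proposition 5.15]{schmudgen}, this is equivalent to $PA\subseteq AP$. By the characterisation of reducing subspaces recalled in the Preliminaries, $PA\subseteq AP$ is exactly the statement that $\jd{T^*}=\ob{P}$ reduces $A=T^{k*}T^k$. This holds for every $k$, which completes the argument.

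The only delicate point is the domain bookkeeping in the unbounded setting: that $\jd{T^*}$ coincides with the kernel of the selfadjoint operator $TT^*$, and that spectral commutation really yields the operator inclusion $PA\subseteq AP$ with domains respected. Both are covered by standard facts, namely the kernel computation above and Schmüdgen's proposition. I therefore expect no real obstacle, only care in quoting these results correctly.
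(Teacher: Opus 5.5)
Your proposal is correct and follows the paper's proof essentially step for step. The paper also uses the commutation of $E_{1,k}$ with $P=E_{2,1}(\{0\})$, converts it into $PT^{k*}T^k\subseteq T^{k*}T^kP$, and concludes from $\ob{P}=\jd{TT^*}=\jd{T^*}$. The only difference is that you explicitly verify $\jd{TT^*}=\jd{T^*}$ and name the spectral-commutation criterion, whereas the paper leaves both implicit.
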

\begin{proof}
Fix $k\in\nbb$. Since $T$ is centered, we have
\begin{align*}%\label{zabierzow04}
E_{1,k}(\sigma)E_{2,1}\big(\{0\}\big)=E_{2,1}\big(\{0\}\big)E_{1,k}(\sigma),\quad \sigma\in\borel{\rbb}.
\end{align*}
This implies that
\begin{align*}%\label{zabierzow05}
PT^{k*}T^k\subseteq T^{k*}T^k P,\quad \sigma\in\borel{\rbb},
\end{align*}
with $P=E_{2,1}\big(\{0\}\big)$. Now it suffices to observe that $P$ is a projection onto $\jd{TT^*}=\jd{T^*}$.
\end{proof}

\section{Spectrally $n$-centered wco's -- a characterisation}

Let $n\in\nbb$ and $\cfw\in\cfrak{n}$. In view of \cite[Lem. 43 and 44]{2018-lnim-budzynski-jablonski-jung-stochel}, $\hsfn{k}<\infty$ a.e. $[\mu]$, $\efwn{k}$ is well defined, $\cfwn{k}$ is densely defined and $\cfw^k=C_{\phi^k, w_k}$ for all $k\in J_n$. Let $E_{1,k}$ and $E_{2,k}$ denote the spectral measures of $\cfw^{k*}\cfw^k$ and $\cfw^k \cfw^{k*}$, $k\in J_n$, respectively. In view of \cite[Ex. 5.3, pg. 93]{schmudgen} and \cite[Eq. (19)]{2025-arxiv-budzynski} we have
\begin{equation}\label{jan02}
\begin{aligned}
   E_{1,k}(\sigma) &= M_{\chi_{\varOmega_{\sigma,k}}} \\
   E_{2,k}(\sigma) &= M_{\chi_{\varGamma_{\sigma,k}}} \psf_k + \chi_\sigma(0)(I-\psf_k),
\end{aligned} \qquad \sigma\in\borel{\rbb}
\end{equation}
where 
\begin{align}\label{trawa}
\varOmega_{\sigma,k}:=\big({\hsfn{k}}\big)^{-1}(\sigma)\quad \text{and}\quad\varGamma_{\sigma,k}:=(\hsfn{k}\circ\phi^k)^{-1}(\sigma).
\end{align}

We begin by showing that the problem of spectral $n$-centeredness of a {\em wco} can be reduced to the case when the weight function is non-negative.
\begin{lem}\label{jana-global}
Let $n \in \nbb$ and $\cfw \in \cfrak{n}$. Then $\cfw$ is spectrally $n$-centered if and only if $C_{\phi, |w|}$ is spectrally $n$-centered.
\end{lem}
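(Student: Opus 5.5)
The plan is to produce a unitary $U$ on $L^2(\mu)$ that intertwines the whole picture. Concretely, we want $U$ to carry each power of $C_{\phi,|w|}$ to the corresponding power of $\cfw$ and to fix every multiplication operator. Spectral $n$-centeredness is preserved under unitary equivalence, so the claim will follow once we know that $U$ maps $E_{i,k}$ for $C_{\phi,|w|}$ onto $E_{i,k}$ for $\cfw$.

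\textbf{Step 1: the common data.} First observe that $C_{\phi,|w|}$ and $\cfw$ induce the same measures. Indeed $\mu_{|w|}=\mu_w$, and likewise the $k$-th cocycle products satisfy $|p^{[k]}_{\phi,w}|=p^{[k]}_{\phi,|w|}$. Hence the Radon--Nikodym derivatives $\hsfn{k}$ and the conditional expectations $\efwn{k}$ coincide for the two operators. In particular $C_{\phi,|w|}\in\cfrak{n}$ if and only if $\cfw\in\cfrak{n}$, by \cite[Lem. 43 and 44]{2018-lnim-budzynski-jablonski-jung-stochel}, since closedness and dense definiteness of $C_{\phi^k,w_k}$ depend only on $\hsfn{k}$ and $\mu_k$.

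\textbf{Step 2: the first spectral family.} By \eqref{jan02}, $E_{1,k}(\sigma)=M_{\chi_{\varOmega_{\sigma,k}}}$. This expression depends only on $\hsfn{k}$, so $E_{1,k}$ is literally the same for both operators.

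\textbf{Step 3: the second spectral family.} For $E_{2,k}$, formula \eqref{jan02} involves $M_{\chi_{\varGamma_{\sigma,k}}}$, which is common to both operators, and the projection $\psf_k$. For $\cfw$ this projection is $\psf_k f=w_k\esf_k(f_{w_k})$, while for $C_{\phi,|w|}$ it is $\psf'_k f=|w_k|\esf_k(f_{|w_k|})$. Write $w_k=s_k|w_k|$ with $s_k=\chi_{\{w_k\neq 0\}}w_k/|w_k|$ a unimodular-on-support function. A direct computation then gives $\psf_k=M_{s_k}\psf'_k M_{\bar s_k}$. The difficulty is that $s_k$ depends on $k$, so a single $M_{s}$ does not conjugate all the $\psf_k$ simultaneously. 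The unitary trick therefore fails as stated, and I would instead compare commutation relations directly.

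Each $E_{2,k}(\sigma)$ is built from $\psf_k$ and multiplication operators. Moreover, $\psf'_k$ is supported on $\{w_k\neq 0\}$, i.e.\ $\psf'_k=M_{\chi_{\{w_k\neq0\}}}\psf'_k M_{\chi_{\{w_k\neq0\}}}$, and the range of $\psf'_k$ consists of $|w_k|\cdot(g\circ\phi^k)$. The key observation I would use is this: a multiplication $M_\chi$ commutes with $\psf_k$ if and only if it commutes with $\psf'_k$, since $M_{s_k}$ commutes with $M_\chi$. This handles all pairs (multiplication, $\psf_k$).

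\textbf{Step 4: the main obstacle.} The remaining pairs, $\psf_k$ versus $\psf_l$ (together with the $M_{\chi_{\varGamma}}$ factors), are the main obstacle. For these I would use that $w_l=w_k\cdot(w_{l-k}\circ\phi^k)$ for $l\ge k$, so $s_l=s_k\cdot(s_{l-k}\circ\phi^k)$. The factor $s_{l-k}\circ\phi^k$ is $\phi^k(\ascr)$-measurable, hence it passes through $\esf_k$. This should show that $M_{\bar s_l}\psf_k M_{s_l}=M_{\bar s_k}\psf_k M_{s_k}$, i.e.\ that $M_{\bar s_l}$ conjugates $\psf_k$ to $\psf'_k$ for all $k\le l$. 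Then, for fixed $k\le l$, conjugation by $M_{s_l}$ simultaneously transforms $\psf'_k,\psf'_l$ and all multiplication operators into their $\cfw$ counterparts. Consequently each required commutation relation for $\cfw$ is equivalent to the corresponding relation for $C_{\phi,|w|}$, which proves the lemma.

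The delicate point in Step 4 is checking the conditional-expectation identity on the set where $w_k\neq0$ but $w_l=0$. There I expect to use that $\psf_k$ acts trivially outside $\{w_k\neq0\}$, together with the $\phi^k(\ascr)$-measurability of $\{w_{l-k}\circ\phi^k\neq0\}$.
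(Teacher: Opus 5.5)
\textbf{There is a genuine gap in Step 4, and it comes from your normalization of the phase.} Your Step 4 strategy is the paper's: conjugate each pair by the phase of the higher-index cocycle, and split $s_l=s_k\cdot(s_{l-k}\circ\phi^k)$, where $s_{l-k}\circ\phi^k$ is $\phi^{-k}(\ascr)$-measurable (you wrote $\phi^k(\ascr)$; it should be $\phi^{-k}(\ascr)$). However, you define $s_k=\chi_{\{w_k\neq0\}}w_k/|w_k|$. With that choice $M_{s_l}$ is only a partial isometry, and the identity you aim for is false.

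To see this, put $g=s_{l-k}\circ\phi^k$. Then $g$ is bounded and $\phi^{-k}(\ascr)$-measurable, so $M_g$ commutes with $\psf'_k$, and $|g|^2=\chi_{\{w_{l-k}\circ\phi^k\neq0\}}$. Hence
\[
M_{\bar s_l}\psf_k M_{s_l}=M_{\bar g}\psf'_k M_{g}=\psf'_k\, M_{\chi_{\{w_{l-k}\circ\phi^k\neq 0\}}},
\]
which is not $\psf'_k=M_{\bar s_k}\psf_k M_{s_k}$ in general. The two operators differ on functions supported in $\{w_k\neq0\}\cap\{w_l=0\}$.
\begin{itemize}
\item Your proposed remedy does not reach this set. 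It lies \emph{inside} $\{w_k\neq0\}$, where $\psf_k$ acts nontrivially, so the fact that ``$\psf_k$ acts trivially outside $\{w_k\neq0\}$'' is irrelevant there.
\item The set is typically not null. In the paper's first example, $w_1(x_1)\neq0=w_2(x_1)$, and $\psf'_1e_{x_1}=\tfrac12(e_{x_1}+e_{y_1})\neq0$, while $M_{\bar s_2}\psf_1M_{s_2}e_{x_1}=0$.
\item Conjugation by $M_{s_l}$ does not fix $I$ or multiplication operators: $M_{s_l}M_hM_{\bar s_l}=M_{h\chi_{\{w_l\neq0\}}}$. This matters because $E_{2,k}(\sigma)$ contains the term $\chi_\sigma(0)(I-\psf_k)$.
\item Conjugation by a non-invertible operator cannot by itself give an ``if and only if'' for commutation relations.
\end{itemize}

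\textbf{The fix is the paper's normalization.} Put $s=w/|w|$ on $\{w\neq0\}$ and $s=1$ on $\{w=0\}$, and let $s_k=p^{[k]}_{\phi,s}$. Then:
\begin{itemize}
\item $|s_k|=1$ everywhere, so each $M_{s_k}$ is unitary.
\item Still $w_k=s_k|w_k|$, and $\psf_k=M_{s_k}\psf'_kM_{\bar s_k}$, because $\bar s_k/|w_k|=1/w_k$ on $\{w_k\neq0\}$.
\item The cocycle identity $s_l=s_k\cdot(s_{l-k}\circ\phi^k)$ holds.
\item Now $g=s_{l-k}\circ\phi^k$ is \emph{unimodular} and $\phi^{-k}(\ascr)$-measurable. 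So $M_g$ commutes with $\psf'_k$, and $M_{\bar s_l}\psf_kM_{s_l}=M_{\bar g}\psf'_kM_g=\psf'_k$ holds exactly.
\end{itemize}
Write $\widetilde E_{2,k}$ for the spectral measure of $C_{\phi,|w|}^kC_{\phi,|w|}^{k*}$. The above gives
\[
E_{2,k}(\sigma)E_{2,l}(\omega)=M_{s_l}\widetilde E_{2,k}(\sigma)\widetilde E_{2,l}(\omega)M_{\bar s_l},
\]
and the same with the factors reversed, so the two commutation relations are equivalent. There is no exceptional set to handle. The mixed pairs $(E_{1,k},E_{2,l})$ follow by conjugating with the unitary $M_{s_l}$, which commutes with $E_{1,k}(\sigma)$.

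So the ``unitary trick'' you abandoned in Step 3 does work: it just has to be applied pairwise, with a phase that is unimodular everywhere. If you want to keep your partial isometries, you would first have to split off the scalar term, $E_{2,k}(\sigma)=M_{\chi_{\varGamma_{\sigma,k}}-\chi_\sigma(0)}\psf_k+\chi_\sigma(0)I$, and then check by hand that every product involved is supported on $\{w_l\neq0\}$. Your proposal does neither.
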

\begin{proof}
For the sake of readability, we write $w_k = p_{\phi,w}^{[k]}$.

Let $s \colon X \to \cbb$ be an $\ascr$-measurable function defined by
\begin{equation}\label{ess}
    s(x)=\left\{ \begin{array}{cc}
         1&  \text{if } w(x)=0,\\
         \frac{w(x)}{|w(x)|}& \text{if } w(x)\neq0. 
    \end{array}\right.
\end{equation}
By \cite[Prop. 115]{2018-lnim-budzynski-jablonski-jung-stochel}, for any $k\in J_n$, $C_{\phi^k, |w_k|}$ is densely defined and 
\begin{align}\label{eq:phase_decomp}
    \cfw^k = M_{s_k} C_{\phi^k, |w_k|},\quad k\in J_n,
\end{align}
with $s_k=p_{\phi, s}^{[k]}$ defined via \eqref{jana01}; moreover, 
\begin{align}\label{jana04}
\mu_{w_k}=\mu_{|w_k|},\quad \hsf_k=\hsf_{\phi^k,|w_k|},\quad \efwn{k} =\esf_{\phi^k, |w_k|}.
\end{align}
Clearly, $|s_k| = 1$ a.e. $[\mu]$, meaning the multiplication operator $M_{s_k}$ is unitary.

For $k\in J_n$, let $\widetilde{E}_{1,k}$ and $\widetilde{E}_{2,k}$ denote the respective spectral measures of $C_{\phi, |w|}^{k*}C_{\phi, |w|}^k$ and $C_{\phi, |w|}^k C_{\phi, |w|}^{k*}$. By \eqref{eq:phase_decomp}, we have
\begin{align*}
    \cfw^{k*}\cfw^k = C_{\phi^k, |w_k|}^* M_{\bar{s}_k} M_{s_k} C_{\phi^k, |w_k|} = C_{\phi^k, |w_k|}^* C_{\phi^k, |w_k|}=C_{\phi, |w|}^{k*}C_{\phi, |w|}^k.
\end{align*}
Therefore, we get (see \eqref{jan02})
\begin{align}\label{jana02}
\widetilde{E}_{1,k}(\sigma)=E_{1,k}(\sigma)=M_{\chi_{\varOmega_{\sigma, k}}}, \quad \sigma\in\borel{\cbb}
\end{align}
Since $\cfw^k\cfw^{k*} = M_{s_k} C_{\phi^k, |w_k|} C_{\phi^k, |w_k|}^* M_{\bar{s}_k}$, we consequently have 
\begin{align}\label{jana03}
    E_{2,k}(\sigma) = M_{s_k} \widetilde{E}_{2,k}(\sigma) M_{\bar{s}_k}, \quad \sigma \in \borel{\cbb}.
\end{align}
By definition, $\cfw$ is spectrally $n$-centered if and only if $\{E_{i,k}\}_{i\in\{1,2\}, k\in J_n}$ is commutative. We will show that this holds if and only if $\{\widetilde{E}_{i,k}\}_{i\in\{1,2\}, k\in J_n}$ is mutually commutative.

Let $k, m\in J_n$. The case of $E_{1,k}$ and $E_{1,m}$ is clear in view of \eqref{jana02}. For the case of $E_{1,k}$ and $E_{2,m}$ we use \eqref{jana02} again and \eqref{jana03} to get\allowdisplaybreaks
\begin{align*}
    E_{1,k}(\sigma) E_{2,m}(\omega) 
    &= M_{\chi_{\varOmega_{\sigma,k}}} M_{s_m} \widetilde{E}_{2,m}(\omega) M_{\bar{s}_m}
    = M_{s_m}M_{\chi_{\varOmega_{\sigma,k}}} \widetilde{E}_{2,m}(\omega) M_{\bar{s}_m}\\
    &=M_{s_m} \widetilde{E}_{1,k}(\sigma) \widetilde{E}_{2,m}(\omega) M_{\bar{s}_m},\\
    E_{2,m}(\omega) E_{1,k}(\sigma) 
    &= M_{s_m} \widetilde{E}_{2,m}(\omega) M_{\bar{s}_m} M_{\chi_{\varOmega_{\sigma,k}}}
    = M_{s_m} \widetilde{E}_{2,m}(\omega) M_{\chi_{\varOmega_{\sigma,k}}} M_{\bar{s}_m} \\
    &= M_{s_m} \widetilde{E}_{2,m}(\omega) \widetilde{E}_{1,k}(\sigma) M_{\bar{s}_m}.
\end{align*}
Since $M_{s_m}$ is unitary, $E_{1,k}(\sigma)$ commutes with $E_{2,m}(\omega)$ if and only if $\widetilde{E}_{1,k}(\sigma)$ commutes with $\widetilde{E}_{2,m}(\omega)$. Now we consider $E_{2,k}$ and $E_{2,m}$. We may assume, without loss of generality, that $m \geqslant k$. By definition, $s_m = s_k \cdot (s_{m-k} \circ \phi^k)$. Let $g = s_{m-k} \circ \phi^k$. Then $M_g\in\ogr{L^2(\mu)}$ and $M_{\bar{s}_k} M_{s_m} = M_g$. Obviously, $g$ is measurable with respect to $\phi^{-k}(\ascr)$. In view of \eqref{jan02} and \eqref{jana04}, 
\begin{align*}
\widetilde{E}_{2,k}(\sigma) = M_{\chi_{\varGamma_{\sigma,k}}} \widetilde{\psf}_k + \chi_\sigma(0)(I-\widetilde{\psf}_k),
\end{align*}
where $\widetilde{\psf}_k = \psf_{\phi^k, |w_k|}$. Because $g$ is $\phi^{-k}(\ascr)$-measurable, we have
\begin{align*}
    \widetilde{\psf}_k M_g f = |w_k| \esf_k \big( g f_{|w_k|} \big) = g |w_k| \esf_k \big( f_{|w_k|} \big) = M_g \widetilde{\psf}_k f,\quad  f \in L^2(\mu).
\end{align*}
This shows that $\widetilde{\psf}_k$ commutes with $M_g$, which immediately implies that $\widetilde{E}_{2,k}(\sigma)$ commutes with $M_g$. By \eqref{jana03} we get\allowdisplaybreaks
\begin{align*}
    E_{2,k}(\sigma) E_{2,m}(\omega) &= M_{s_k} \widetilde{E}_{2,k}(\sigma) M_{\bar{s}_k} M_{s_m} \widetilde{E}_{2,m}(\omega) M_{\bar{s}_m} 
    = M_{s_k} \widetilde{E}_{2,k}(\sigma) M_g \widetilde{E}_{2,m}(\omega) M_{\bar{s}_m} \\
    &= M_{s_k} M_g \widetilde{E}_{2,k}(\sigma) \widetilde{E}_{2,m}(\omega) M_{\bar{s}_m} 
    = M_{s_m} \widetilde{E}_{2,k}(\sigma) \widetilde{E}_{2,m}(\omega) M_{\bar{s}_m}.
\end{align*}
In a similar fashion, using commutativity of $M_{\bar{g}}$ and $\widetilde{E}_{2,k}(\sigma)$, we get
\begin{align*}
    E_{2,m}(\omega) E_{2,k}(\sigma) =M_{s_m} \widetilde{E}_{2,m}(\omega) \widetilde{E}_{2,k}(\sigma) M_{\bar{s}_m}.
\end{align*}
Comparing the two above, we see that $E_{2,k}(\sigma)$ commutes with $E_{2,m}(\omega)$ if and only if $\widetilde{E}_{2,k}(\sigma)$ commutes with $\widetilde{E}_{2,m}(\omega)$. Since the mutually commutative property holds for all pairs in $\{E_{i,k}\}$ if and only if it holds for $\{\widetilde{E}_{i,k}\}$, the proof is complete.
\end{proof}
We showed in \cite[Th. 4]{2026-arxiv-budzynski-01} that the following conditions
\begin{itemize}
\item[(ii)] for every $n\in\nbb$, $\pfwn{1} M_{\hsfn{n}}\subseteq M_{\hsfn{n}} \pfwn{1}$,
\item[(iii)] for every $n\in\nbb$, $\efwn{1}(\hsfn{n})= \hsfn{n}$ a.e. $[\mu_w]$,
\end{itemize}are equivalent whenever $\cfw$ is bounded. This led to a characterisation of (bounded) centered {\em wco}'s. Below we generalise this result and apply it later in the context of spectrally $n$-centered operators.
\begin{lem}\label{gary01}
Let $n\in\nbb$ and $\cfw\in \cfrak{n}$. Let $k, l \in J_n$. Then the following conditions are equivalent:
\begin{itemize}
\item[(i)] $\pfwn{k} \mno{l}\subseteq \mno{l} \pfwn{k}$,
\item[(ii)] $\efwn{k}(\hsf_l)= \hsf_l$ a.e. $[\mu_k]$.
\end{itemize}
\end{lem}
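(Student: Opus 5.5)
The plan is to translate (i) into a statement about multiplication by $\hsf_l$ commuting with the conditional expectation $\efwn{k}$ on $L^2(\mu_k)$. Throughout I write $w_k=p_{\phi,w}^{[k]}$. The tool is the unitary-type correspondence $f\mapsto f_{w_k}$, which sends the part of $L^2(\mu)$ supported on $\{w_k\neq0\}$ isometrically onto $L^2(\mu_k)$; under it, $\pfwn{k}$ corresponds to $\efwn{k}$. Since $\mno{l}$ is multiplication by $\hsf_l$, it commutes with $M_{\chi_{\{w_k\neq0\}}}$ and with division by $w_k$. Also, $\pfwn{k}$ kills functions supported on $\{w_k=0\}$ and has range in functions supported on $\{w_k\neq 0\}$. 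Hence (i) reduces to the statement that $\efwn{k}(\hsf_l g)=\hsf_l\efwn{k}(g)$ for all $g\in L^2(\mu_k)$ with $\hsf_l g\in L^2(\mu_k)$, together with the domain requirement that $\efwn{k}(g)\in L^2(\hsf_l^2\,\D\mu_k)$ for such $g$.

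\emph{(ii)$\Rightarrow$(i).} Assume $\hsf_l$ agrees $\mu_k$-a.e. with a $\phi^{-k}(\ascr)$-measurable function. I would use truncation. For $\Delta_m=\{\hsf_l\Le m\}$ the functions $\chi_{\Delta_m}$ and $\hsf_l\chi_{\Delta_m}$ are bounded and $\phi^{-k}(\ascr)$-measurable mod $\mu_k$. The pull-out property of conditional expectation then gives $\efwn{k}(\hsf_l\chi_{\Delta_m}g)=\hsf_l\chi_{\Delta_m}\efwn{k}(g)$. Consequently $\pfwn{k}$ commutes with the bounded operators $M_{\chi_{\Delta_m}}$ and $M_{\hsf_l\chi_{\Delta_m}}$, i.e.\ with every spectral projection $E_{\mno{l}}([0,m])=M_{\chi_{\Delta_m}}$. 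Since the sets $\Delta_m$ increase to $X$ a.e. (because $\hsf_l<\infty$ a.e.\ by Lemmas~43 and~44 of \cite{2018-lnim-budzynski-jablonski-jung-stochel}), a bounded operator commuting with all these spectral projections spectrally commutes with $\mno{l}$. Then \cite[Proposition 5.15]{schmudgen} yields $\pfwn{k}\mno{l}\subseteq\mno{l}\pfwn{k}$. I would check along the way that the spectral measure of $\mno{l}$ is $M_{\chi_{\hsf_l^{-1}(\cdot)}}$, which is standard.

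\emph{(i)$\Rightarrow$(ii).} Here I would again invoke \cite[Proposition 5.15]{schmudgen}: (i) means that $\pfwn{k}$ commutes with $M_{\chi_{\sigma'}}$ for every $\sigma'=\hsf_l^{-1}(\sigma)$. Fix $\sigma$ and set $\Delta=\hsf_l^{-1}(\sigma)$. Since $\mu_k$ is $\sigma$-finite, I can test the commutation on $f=w_k\chi_{A}$ for sets $A$ of finite $\mu_k$-measure. This gives
\begin{align*}
\efwn{k}(\chi_\Delta\chi_A)=\chi_\Delta\efwn{k}(\chi_A)\quad\text{a.e. }[\mu_k].
\end{align*}
Letting $A$ increase to $X$ and using monotone convergence for conditional expectations, I get $\efwn{k}(\chi_\Delta)=\chi_\Delta$. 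So $\chi_\Delta$, being an idempotent fixed point of the projection, is $\phi^{-k}(\ascr)$-measurable mod $\mu_k$. Since this holds for all Borel $\sigma$, $\hsf_l$ itself is $\phi^{-k}(\ascr)$-measurable mod $\mu_k$. Finally, $\efwn{k}$ acts as the identity on nonnegative functions that are measurable for the sub-$\sigma$-algebra, which gives (ii).

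The main obstacle is bookkeeping in the unbounded, general (not $M_wC_\phi$) setting. The points to handle are: (a) the correspondence between $\pfwn{k}$ and $\efwn{k}$, including the part of the space where $w_k=0$; (b) the fact that $\hsf_l$ may be unbounded and not in any $L^p(\mu_k)$, so $\efwn{k}(\hsf_l)$ must be understood in the $\rbop$-valued sense; and (c) a.e.\ $[\mu]$ versus a.e.\ $[\mu_k]$ issues when passing from the operator identity on $L^2(\mu)$ to the identity in (ii). I would handle (b) entirely through the truncations and indicator functions above, so that only bounded functions ever enter $\efwn{k}$.
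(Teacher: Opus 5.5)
Your proof is correct, but it takes a different route from the paper in both directions.

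\textbf{The paper's route.} It first reduces to $w\geqslant 0$.
\begin{itemize}
\item[(i)$\Rightarrow$(ii)] It writes (i) pointwise as $w_k\hsf_l\efwn{k}(f_{w_k})=w_k\efwn{k}(\hsf_l f_{w_k})$ for nonnegative $f\in\dz{\mno{l}}$. It extends this by monotone convergence to all nonnegative measurable $f$, and then takes $f\equiv 1$.
\item[(ii)$\Rightarrow$(i)] It proves the inclusion $\pfwn{k}\dz{\mno{l}}\subseteq\dz{\mno{l}}$ directly, using $|\efwn{k}(f_{w_k})|^2\leqslant\efwn{k}(|f_{w_k}|^2)$ and $\efwn{k}(\hsf_l^2)=\hsf_l^2$. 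It then gets the equality from the pull-out property.
\end{itemize}

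\textbf{Your route.} You work through spectral projections instead.
\begin{itemize}
\item[(i)$\Rightarrow$(ii)] You test the commutation of $\pfwn{k}$ with $M_{\chi_{\hsf_l^{-1}(\sigma)}}$ on $w_k\chi_A$. This gives $\efwn{k}(\chi_\Delta)=\chi_\Delta$ for every level set, from which you reassemble a $\phi^{-k}(\ascr)$-measurable version of $\hsf_l$.
\item[(ii)$\Rightarrow$(i)] You apply the pull-out property only to bounded multipliers and let Schm\"udgen's criterion handle the domain.
\end{itemize}

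\textbf{Comparison.} Your approach needs neither the sign reduction, nor approximation inside $\dz{\mno{l}}$, nor the Jensen estimate; only indicators and bounded functions ever enter $\efwn{k}$. The paper's approach is more hands-on and exhibits the domain inclusion explicitly. Yours instead needs the standard step of passing from level sets to a measurable version of $\hsf_l$.

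\textbf{One sentence to fix.} In (ii)$\Rightarrow$(i), commutation of $\pfwn{k}$ with the projections $E_{\mno{l}}([0,m])=M_{\chi_{\Delta_m}}$, $m\in\nbb$, does not by itself give spectral commutation with $\mno{l}$. Those projections only see the sets $\{m-1<\hsf_l\leqslant m\}$, not finer level sets of $\hsf_l$. The conclusion still follows from what you already have, in either of two ways:
\begin{itemize}
\item[(a)] Use commutation with the bounded selfadjoint operators $M_{\hsf_l\chi_{\Delta_m}}$. This gives commutation with their spectral projections $M_{\chi_{\hsf_l^{-1}(\sigma)\cap\Delta_m}}$ for $0\notin\sigma$; then let $m\to\infty$.
\item[(b)] More simply, apply the pull-out property directly to $\chi_{\hsf_l^{-1}(\sigma)}$. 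Under (ii), this function is $\phi^{-k}(\ascr)$-measurable mod $\mu_k$ for every $\sigma\in\borel{\rbb}$.
\end{itemize}
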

\begin{proof}
We write $w_k = p_{\phi,w}^{[k]}$ and $s_k = p_{\phi,s}^{[k]}$, where $s$ is defined via \eqref{ess}.

We first reduce the proof to the case $w \geqslant 0$ a.e. $[\mu]$. The projection $\pfwn{k}$ satisfies $\pfwn{k} = M_{s_k} \widetilde{\psf}_k M_{\bar{s}_k}$, where $\widetilde{\psf}_k=\psf_{\phi^k, |w_k|}$. Since $|s_k|=1$ a.e. $[\mu]$, $\mno{l}$ commutes with both the operators $M_{s_k}$ and $M_{\bar{s}_k}$. Consequently, $\pfwn{k} \mno{l} \subseteq \mno{l} \pfwn{k}$ if and only if $\widetilde{\psf}_k \mno{l} \subseteq \mno{l} \widetilde{\psf}_k$. Furthermore, condition (ii) relies entirely on $\hsf_l$ and $\mu_k$, both of which are independent of the phase of $w$ due to \eqref{jana04}. Thus, assuming $w \geqslant 0$ a.e. $[\mu]$ leads to no loss in generality. Consequently, $w_k \geqslant 0$ a.e. $[\mu]$.

(i)$\Rightarrow$(ii) Condition (i) implies that
\begin{align*}
w_k\hsf_l\efwn{k} (f_{w_k})= w_k\efwn{k}(\hsf_l f_{w_k}) \text{ a.e. $[\mu]$}, \quad f\in \dz{\mno{l}}. 
\end{align*}
Let $L^2_+(\mu)=\{f\in L^2(\mu) \colon f\geqslant 0\}$ and ${\EuScript D}_+(\mno{l})= L^2_+(\mu)\cap \dz{\mno{l}}$. Clearly, ${\EuScript D}_+(\mno{l})$ is dense in $L^2_+(\mu)$, and we have
\begin{align}\label{gary03}
w_k \hsf_l \efwn{k} (f_{w_k})= w_k\efwn{k}(\hsf_l f_{w_k}) \text{ a.e. $[\mu]$}, \quad f\in {\EuScript D}_+(\mno{l}).
\end{align}
Since all functions in \eqref{gary03} are $\rbb_+$-valued, we may apply the monotone convergence theorem. For any $f\in L^2_+(\mu)$, there exists a sequence $\{f_m\}\subseteq {\EuScript D}_+(\mno{l})$ such that $f_m\nearrow f$ as $m\to +\infty$ (this follows from $\hsf_l<+\infty$ a.e. $[\mu]$). Passing to the limit, we infer that the equality in \eqref{gary03} is satisfied for any $f\in L^2_+(\mu)$. Using standard measure theoretic arguments and $\efwn{k}(\hsf_l g)=\efwn{k}(\hsf_l g \chi_{\{w_k\neq 0\}})$ a.e. $[\mu_k]$, which is valid for every $\ascr$-measurable $g\colon X\to \rbb_+$, we deduce that 
\begin{align*}
\hsf_l \efwn{k} (f)= \efwn{k}(\hsf_l f) \text{ a.e. $[\mu_k]$}
\end{align*}
holds for every $\ascr$-measurable $f\colon X\to\rbb_+$. Setting $f \equiv 1$ yields $\hsf_l = \efwn{k}(\hsf_l)$ a.e. $[\mu_k]$, which proves (ii).

(ii)$\Rightarrow$(i) We show that $\pfwn{k}\dz{\mno{l}}\subseteq \dz{\mno{l}}$. Take $f\in \dz{\mno{l}}=L^2\big((1+\hsf_l^2)\D\mu\big)$. Clearly, $\pfwn{k}f\in L^2(\mu)$. Since $\big|\efwn{k}(f_{w_k})\big|^2\leqslant \efwn{k}\big(|f_{w_k}|^2\big)$ a.e. $[\mu_k]$ and $f\in L^2(\hsf_l^2\D\mu)$, we get the following
\begin{align*}
\int_X |\pfwn{k} f|^2 \hsf_l^2\D\mu&= \int_X |w_k|^2 \big|\efwn{k} (f_{w_k})\big|^2 \hsf_l^2\D\mu\leqslant \int_X |w_k|^2 \efwn{k} \big(|f_{w_k}|^2\big) \hsf_l^2\D\mu\\
&= \int_X \efwn{k} \big(|f_{w_k}|^2\big) \hsf_l^2\D\mu_k.
\end{align*}
Condition (ii) implies that $\efwn{k}(\hsf_l^2) = \hsf_l^2$ a.e. $[\mu_k]$. Using this, we continue\allowdisplaybreaks
\begin{align*}
\int_X \efwn{k} \big(|f_{w_k}|^2\big) \hsf_l^2\D\mu_k &= \int_X \efwn{k} \big(\hsf_l^2|f_{w_k}|^2\big)\D\mu_k = \int_X \hsf_l^2|f_{w_k}|^2\D\mu_k \\
&\leqslant \int_X \hsf_l^2|f|^2\D\mu<+\infty.
\end{align*}
Thus $\pfwn{k} f\in L^2\big((1+\hsf_l^2)\D\mu\big)=\dz{\mno{l}}$. Consequently, $\pfwn{k}\dz{\mno{l}}\subseteq \dz{\mno{l}}$. That the equality $\pfwn{k} \mno{l} f=\mno{l} \pfwn{k} f$ is valid for all $f\in\dz{\mno{l}}$ follows immediately from \cite[(A.13) in Appendix A]{2018-lnim-budzynski-jablonski-jung-stochel} and the fact that $f_{w_k}$ and $\hsf_l f_{w_k}$ belong to $L^2(\mu_k)$. This completes the proof.
\end{proof}
The above brings us to the main result of the paper. Before stating it, we introduce terminology: given a $\sigma$-finite measure space $(X,\ascr, \mu)$, a $\sigma$-algebra $\bscr\subseteq \ascr$, an $\ascr$-measurable function $f\colon X\to \overline{\rbb}_+$, and a set $\varOmega\in \ascr$, we say that $f$ is {\em $\bscr$-measurable relative to $\varOmega$} if and only if there exist a $\bscr$-measurable function $g\colon X\to \overline{\rbb}_+$ such that $f=g$ a.e. $[\mu]$ on $\varOmega$.
\begin{thm}\label{gary02}
Let $n\in\nbb$ and $\cfw\in \cfrak{n}$. Then the following conditions are equivalent:
\begin{itemize}
\item[(i)] $\cfw$ is spectrally $n$-centered,
\item[(ii)] for all $k,l \in J_n$, $\pfwn{k} \mno{l}\subseteq \mno{l} \pfwn{k}$,
\item[(iii)] for all $k,l \in J_n$, $\efwn{k}(\hsf_l)= \hsf_l$ a.e. $[\mu_k]$,
\end{itemize}
\end{thm}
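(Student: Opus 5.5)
The equivalence (ii)$\Leftrightarrow$(iii) is immediate from Lemma \ref{gary01}, applied separately to each pair $k,l\in J_n$. So the substance of the theorem is (i)$\Leftrightarrow$(ii). By Lemma \ref{jana-global}, and because (ii) and (iii) are unchanged when $w$ is replaced by $|w|$ (see \eqref{jana04} and the reduction in the proof of Lemma \ref{gary01}), we may assume $w\Ge 0$ throughout. We then work with the explicit spectral measures \eqref{jan02}.

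\textbf{(i)$\Rightarrow$(ii).} Fix $k,l\in J_n$. From \eqref{jan02}, $E_{2,k}(\{0\}) = M_{\chi_{\varGamma_{\{0\},k}}}\psf_k + (I-\psf_k)$. The aim is to show that $E_{2,k}(\rbb\setminus\{0\}) = I - E_{2,k}(\{0\})$ equals $\psf_k$, i.e.\ that $M_{\chi_{\{\hsf_k\circ\phi^k=0\}}}\psf_k=0$. This holds because $\hsf_k\circ\phi^k>0$ a.e.\ $[\mu_k]$ (indeed $\mu_k(\{\hsf_k\circ\phi^k=0\})=\int_{\{\hsf_k=0\}}\hsf_k\D\mu=0$), and $\psf_k f$ vanishes off $\{w_k\neq0\}$. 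Equivalently, $\psf_k$ is the projection onto $\overline{\ob{\cfw^k}}=\jd{\cfw^{k*}}^\perp$, which is the range of $E_{2,k}(\rbb\setminus\{0\})$. Spectral $n$-centeredness gives commutation of $E_{2,k}(\omega)$ with $E_{1,l}(\sigma)=E_{\mno{l}}(\sigma)$ for all Borel sets, so $\psf_k$ commutes with every spectral projection of $\mno{l}$. By \cite[Proposition 5.15]{schmudgen}, $\psf_k\mno{l}\subseteq\mno{l}\psf_k$.

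\textbf{(ii)$\Rightarrow$(i).} There are three families of pairs to check.
\begin{itemize}
\item[(a)] $E_{1,k}$ with $E_{1,l}$: both are multiplication operators, so they commute trivially.
\item[(b)] $E_{1,l}$ with $E_{2,k}$: by (ii) and \cite[Proposition 5.15]{schmudgen}, $\psf_k$ commutes with $M_{\chi_{\varOmega_{\sigma,l}}}$. Multiplication operators commute with each other, so $E_{1,l}(\sigma)$ commutes with $E_{2,k}(\omega)=M_{\chi_{\varGamma_{\omega,k}}}\psf_k+\chi_\omega(0)(I-\psf_k)$.
\item[(c)] $E_{2,k}$ with $E_{2,m}$ for $k\Le m$: this is the main obstacle.
\end{itemize}

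For (c) the plan has four steps.
\begin{enumerate}
\item Since $\ob{\cfw^m}\subseteq\ob{\cfw^k}$, we have $\psf_m\psf_k=\psf_k\psf_m=\psf_m$.
\item $\psf_k$ commutes with $M_g$ for every bounded $\phi^{-k}(\ascr)$-measurable $g$ (the same computation as in the proof of Lemma \ref{jana-global}). Since $\phi^{-m}(\ascr)\subseteq\phi^{-k}(\ascr)$, $\psf_k$ commutes with $M_{\chi_{\varGamma_{\omega,m}}}$, and likewise $\psf_m$ commutes with $M_{\chi_{\varGamma_{\sigma,m}}}$.
\item The key remaining claim is that $\psf_m$ commutes with $M_{\chi_{\varGamma_{\sigma,k}}}$, where $\chi_{\varGamma_{\sigma,k}}=\chi_{\sigma}\circ\hsf_k\circ\phi^k$ is only $\phi^{-k}(\ascr)$-measurable. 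Condition (iii) for the pair $(m-k,k)$ says that $\hsf_k$ is $\phi^{-(m-k)}(\ascr)$-measurable relative to the support of $\mu_{m-k}$, i.e.\ $\hsf_k=g\circ\phi^{m-k}$ a.e.\ $[\mu_{m-k}]$ for some $g$. Transporting along $\phi^k$, using $\mu_m=\mu_{w_k}$-weighted images and the cocycle identity $w_m=w_k\cdot(w_{m-k}\circ\phi^k)$, should give $\hsf_k\circ\phi^k=g\circ\phi^m$ a.e.\ $[\mu_m]$. Since $\psf_m f$ vanishes off $\{w_m\neq0\}$, the multiplier can then be replaced by a $\phi^{-m}(\ascr)$-measurable one in both $\psf_m M_{\chi}$ and $M_\chi\psf_m$. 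The delicate points are the null-set bookkeeping in the passage from $\mu_{m-k}$ to $\mu_m$ (via the identity $\int f\circ\phi^k\D\mu_k=\int f\hsf_k\D\mu$) and the requirement $m-k\in J_n$; when $m=k$ the claim is trivial.
\item Given steps 1–3, expand $E_{2,k}(\sigma)E_{2,m}(\omega)$ and $E_{2,m}(\omega)E_{2,k}(\sigma)$ into four terms each. Using the three commutation relations and $\psf_k\psf_m=\psf_m$, the two expansions match term by term.
\end{enumerate}
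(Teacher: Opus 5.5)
Your proposal is correct and follows the paper's proof essentially step for step. You reduce to $w\geqslant 0$ via Lemma \ref{jana-global}, identify $\psf_k=E_{2,k}\big((0,\infty)\big)$ for (i)$\Rightarrow$(ii), and use Lemma \ref{gary01} for (ii)$\Leftrightarrow$(iii). For the pair $E_{2,k},E_{2,m}$ you use the same key step: apply (iii) to the pair $(m-k,k)$ and transport the relative measurability along $\phi^k$, so that $\hsf_k\circ\phi^k$ is $\phi^{-m}(\ascr)$-measurable relative to $\{w_m\neq 0\}$. Your extra justifications (that $\hsf_k\circ\phi^k>0$ a.e. $[\mu_k]$, and that (ii) and (iii) are invariant under $w\mapsto|w|$) only spell out points the paper leaves implicit.
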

\begin{proof}
Write $w_m = p_{\phi,w}^{[m]}$ for $m \in J_n$. In view of Lemma \ref{jana-global}, we may assume without loss of generality that $w \geqslant 0$ a.e. $[\mu]$. Note that $\pfwn{k}$ is the orthogonal projection onto $\overline{\ob{C_{\phi^k,w_k}}}$ (see \cite[Lemma 4.2]{2020-mn-benhida-budzynski-trepkowski}). Thus, in view of \eqref{jan02}, we have $\pfwn{k} = E_{2,k}\big((0, \infty)\big)$.

(i)$\Rightarrow$(ii) Assume that $\cfw$ is spectrally $n$-centered and fix $k, l \in J_n$. Then the spectral measures $E_{2,k}$ and $E_{1,l}$ commute. In particular, $E_{2,k}\big((0, \infty)\big) = \pfwn{k}$ commutes with $E_{1,l}(\sigma)$, $\sigma \in \borel{\rbb}$. Since $E_{1,l}$ is the spectral measure of $\mno{l}$, $\pfwn{k}$ spectrally commutes with $\mno{l}$. This is equivalent to $\pfwn{k} \mno{l} \subseteq \mno{l} \pfwn{k}$. Since $k, l$ can be chosen arbitrarily, (ii) holds.

(ii)$\Leftrightarrow$(iii) Follows from Lemma \ref{gary01}.

(iii)$\Rightarrow$(i) Assume that (iii) holds. Fix $k,l\in J_n$ and $\sigma,\omega\in\borel{\rbb}$. We show that 
\begin{align}\label{kom01}
E_{i,k}(\sigma)E_{j,l}(\omega)=E_{j,l}(\omega)E_{i,k}(\sigma),\quad i,j\in\{1,2\}.
\end{align}
Since $E_{1,k}(\sigma)$ and $E_{1,l}(\omega)$ are bounded multiplication operators (see \eqref{jan02}), they commute, which proves \eqref{kom01} for $i=j=1$.  By the equivalence of (iii) and (ii), $\pfwn{l}$ spectrally commutes with $\mno{k}$, which implies it commutes with $E_{1,k}(\sigma)$. Furthermore, $E_{1,k}(\sigma)$, being a bounded multiplication operator, commutes with $M_{\chi_{\varGamma_{\omega, l}}}$ (cf. \eqref{trawa}). Using \eqref{jan02}, we deduce the equality in \eqref{kom01} with $i=1$ and $j=2$. For the proof of the equality in \eqref{kom01} with $i=j=2$, it suffices to show that projections $\pfwn{k}, \pfwn{l}$ and multiplication operators $M_{\chi_{\varGamma_{\sigma, k}}}, M_{\chi_{\varGamma_{\omega, l}}}$ are mutually commutative. We may assume without loss of generality that $k > l$. Since $\ob{\cfw^k}\subseteq \ob{\cfw^l}$, $\pfwn{k} \pfwn{l} = \pfwn{l} \pfwn{k} = \pfwn{k}$. Furthermore, $\chi_{\varGamma_{\sigma, k}} = \chi_{ \hsf_k^{-1}(\sigma)}\circ \phi^k $ is measurable with respect to $\phi^{-k}(\ascr)\subseteq \phi^{-l}(\ascr)$, which means that $\chi_{\varGamma_{\sigma, k}}$ is $\phi^{-l}(\ascr)$-measurable. This immediately yields $\pfwn{l} M_{\chi_{\varGamma_{\sigma, k}}} = M_{\chi_{\varGamma_{\sigma, k}}} \pfwn{l}$. It remains to show that $\pfwn{k}$ commutes with $M_{\chi_{\varGamma_{\omega, l}}}$. Since $\varGamma_{\omega, l} = (\hsf_l \circ \phi^l)^{-1}(\omega)$, it suffices to prove that $\hsf_l \circ \phi^l$ is $\phi^{-k}(\ascr)$-measurable relative to $\{w_k \neq 0\}$. By condition (iii), $\efwn{k-l}(\hsf_l) = \hsf_l$ a.e. $[\mu_{k-l}]$. This means $\hsf_l$ is $\phi^{-(k-l)}(\ascr)$-measurable relative to $\{w_{k-l} \neq 0\}$, ensuring there exists an $\ascr$-measurable function $g$ such that 
\begin{align*}
    w_{k-l} \hsf_l = w_{k-l} \cdot g \circ \phi^{k-l} \quad \text{a.e. } [\mu].
\end{align*}
Using \cite[Lem. 5]{2018-lnim-budzynski-jablonski-jung-stochel}, we get
\begin{align*}
    w_k \cdot \hsf_l \circ \phi^l = w_k \cdot g \circ \phi^k \quad \text{a.e. } [\mu],
\end{align*}
which proves that $\hsf_l \circ \phi^l$ is $\phi^{-k}(\ascr)$-measurable relative to $\{w_k \neq 0\}$. Thus, $M_{\chi_{\varGamma_{\omega, l}}}$ commutes with $\pfwn{k}$. Since $k,l\in J_n$ and $\sigma,\omega\in\borel{\rbb}$ were arbitrary, $\big\{E_{i,k}\colon i\in\{1,2\}, k\in J_n\big\}$ is mutually commutative, which completes the proof.
\end{proof}

An example of a $\cfw$ that, for a given $n\in\nbb$ is $n$-centered, but not $(n+1)$-weakly centered can be constructed easily.

\begin{exa}
Let $n \in \nbb$. Let $X = \{0\} \cup \{x_1, \ldots, x_{2n+1}\} \cup \{y_1, \ldots, y_{2n+1}\}$, let $\ascr = 2^X$, and let $\mu$ be the counting measure on $X$. We define a transformation $\phi \colon X \to X$ by 
\begin{align*}
    \phi(0) = 0,\quad \phi(x_1) = \phi(y_1) = 0,
\end{align*}
and 
\begin{align*}
    \phi(x_i) = x_{i-1},\quad \phi(y_i) &= y_{i-1}, \quad i \in \{2, \ldots, 2n+1\},
\end{align*}
and a weight function $w \colon X \to \rbb_+$ by $w(0) = 0$, $w(x_{2n+1}) = \sqrt{2}$, $w(y_{2n+1}) = \sqrt{3}$, and $w(z) = 1$ for all other $z \in X$. The graph illustrating $\phi$ and $w$ is presented in Figure \ref{fig:n_weak_strict} (the arrows dictate the mapping of $\phi$, i.e., $\phi(x) = y$ is represented by an arrow from $x$ to $y$, and the numbers above the vertices denote the values of the weight $w$). Let $\cfw$ be the weighted composition operator induced by $\phi$ and $w$.
\begin{figure}[ht]
\begin{center}
\begin{tikzpicture}[scale=0.9, transform shape]
\tikzstyle{every node} = [circle,fill=gray!30]

\node (0)[font=\footnotesize, inner sep = 3pt, label={[fill=none]above:$0$}] at (0,0) {$0$};

\node (x1)[font=\footnotesize, inner sep = 2pt, label={[fill=none]above:$1$}] at (2,1.0) {$x_1$};
\node (xdots)[fill=none] at (4,1.0) {$\cdots$};
\node (x2n)[font=\footnotesize, inner sep = 1pt, label={[fill=none]above:$1$}] at (6,1.0) {$x_{2n}$};
\node (x2np1)[font=\footnotesize, inner sep = 0pt, label={[fill=none]above:$\sqrt{2}$}] at (8.5,1.0) {$x_{2n+1}$};

\node (y1)[font=\footnotesize, inner sep = 2pt, label={[fill=none]above:$1$}] at (2,-1.0) {$y_1$};
\node (ydots)[fill=none] at (4,-1.0) {$\cdots$};
\node (y2n)[font=\footnotesize, inner sep = 1pt, label={[fill=none]above:$1$}] at (6,-1.0) {$y_{2n}$};
\node (y2np1)[font=\footnotesize, inner sep = 0pt, label={[fill=none]above:$\sqrt{3}$}] at (8.5,-1.0) {$y_{2n+1}$};

\draw[<-] (0) --(x1);
\draw[<-, dashed] (x1) --(xdots);
\draw[<-, dashed] (xdots) --(x2n);
\draw[<-] (x2n) --(x2np1);

\draw[<-] (0) --(y1);
\draw[<-, dashed] (y1) --(ydots);
\draw[<-, dashed] (ydots) --(y2n);
\draw[<-] (y2n) --(y2np1);

\draw[<-] (0) to[out=135, in=225, looseness=6] (0);
\end{tikzpicture}
\end{center}
\caption{\label{fig:n_weak_strict} The graph illustrating a $\cfw$ that is $n$-centered, but not $(n+1)$-centered.}
\end{figure}
Since $X$ is finite, $\cfw$ is bounded. We first show that $\cfw$ is $n$-centered. Set $w_k = p_{\phi,w}^{[k]}$. Since $w(0) = 0$, we have $w_k(0) = 0$. For $z \in \{x_i, y_i\}$, $w_k(z) \neq 0$ if and only if $i \geqslant k$. The only sets of the form $\phi^{-k}(\{v\})$ that are neither singletons nor empty, are  given by $\phi^{-k}(\{0\}) = \{0\} \cup \{x_1, \ldots, x_k\} \cup \{y_1, \ldots, y_k\}$. Thus, $\efwn{k}(\hsf_l) = \hsf_l$ a.e. $[\mu_k]$ holds if and only if $\hsf_l(x_k) = \hsf_l(y_k)$. Since $k, l \in J_n$, we have $k+l \leqslant 2n < 2n+1$. Thus  $\phi^{-l}(\{x_k\}) = \{x_{k+l}\}$ and $\phi^{-l}(\{y_k\}) = \{y_{k+l}\}$. This yields (see \cite[Eq. (6.5)]{2018-lnim-budzynski-jablonski-jung-stochel})
\begin{align*}
    \hsf_l(x_k) = |w_l(x_{k+l})|^2 = 1 \quad \text{and}\quad 
    \hsf_l(y_k) = |w_l(y_{k+l})|^2 = 1.
\end{align*}
Because $\hsf_l(x_k) = \hsf_l(y_k)$, the condition $\efwn{k}(\hsf_l) = \hsf_l$ a.e. $[\mu_k]$ is satisfied for all $k,l \in J_n$. Thus, $\cfw$ is spectrally $n$-centered. On the other hand, we have
\begin{align*}
    \hsf_n(x_{n+1}) &= |w_n(x_{2n+1})|^2 = (\sqrt{2})^2 = 2, \\
    \hsf_n(y_{n+1}) &= |w_n(y_{2n+1})|^2 = (\sqrt{3})^2 = 3.
\end{align*}
Moreover, $w_{n+1}(x_{n+1}) = w_{n+1}(y_{n+1}) = 1 \neq 0$. Thus, $\hsf_n$ is not constant on $\phi^{-(n+1)}(\{0\})$ relative to $\{w_{n+1}\neq 0\}$. Consequently, $\efwn{n+1}(\hsf_n) = \hsf_n$ a.e. $[\mu_{n+1}]$ fails, meaning $\cfw$ is not $(n+1)$-weakly centered.
\end{exa}

\begin{rem}
An $n$-centered operator which is not $(n+1)$-centered can be constructed over a non-discrete measure space by embedding the discrete example into the real line. Let $X = \rbb_+$, $\ascr=\borel{\rbb_+}$, and $\mu$ be the Lebesgue measure on $\rbb_+$. Given any $\psi \colon \zbb_+ \to \zbb_+$, we define $\phi=\phi_{\psi} \colon X \to X$ by
\begin{align*}
    \phi(x) = x - \lfloor x \rfloor + \psi(\lfloor x \rfloor), \quad x \in X.
\end{align*}
Let $t\in\rbb_+$ and let $\varDelta\in\ascr$ satisfy $\Delta \subseteq [\lfloor t \rfloor, \lfloor t \rfloor + 1)$. Then we have 
\begin{align*}
\phi^{-1}(\Delta) = \bigcup_{k \in \psi^{-1}(\{\lfloor t \rfloor\})} (\Delta - \lfloor t \rfloor + k).
\end{align*}
By translation invariance of the Lebesgue measure, we have
\begin{align*}
\mu(\phi^{-1}(\Delta)) = \card{\psi^{-1}(\{\lfloor t \rfloor\})} \mu(\Delta)
\end{align*}
Consequently, $\hsfn{1}=\hsf_{\phi,1}$ takes the form
\begin{align*}
    \hsfn{1}(x) = \card{\psi^{-1}(\{\lfloor x \rfloor\})}, \quad x \in X.
\end{align*}
Choosing $\psi$ such that $k\mapsto \card{\psi^{-1}(\{k\})}$ is constant on the sets $\psi^{-j}(\{m\})$ for all $m \in \zbb_+$ and $j\in J_n$, but fails to be constant on $\psi^{-(n+1)}(\{m_0\})$ for some $m_0 \in \zbb_+$, we obtain $\phi$ so that the composition operator $C_\phi$ is $n$-centered but not $(n+1)$-centered.
\end{rem}

In the next elementary example we demonstrate ``local'' sensivity of $n$-centeredness. For this we consider a \emph{wco} whose transformation's graph is a rooted binary tree. Such a tree, if valency of a vertex is taken into account, seems to be the simplest ``regular'' generalisation of the rooted directed tree $\zbb_+$ supporting classical unilateral shifts (known to be centered).

\begin{exa}
Let $X = \{0\} \cup \{x_{m,k} \colon k\in J_{2^m}, m \in \nbb\}$, $\ascr = 2^X$, and $\mu$ be the counting measure. We define $\phi \colon X \to X$ by $\phi(0) = 0$, $\phi(x_{1,1}) = \phi(x_{1,2}) = 0$, and
\begin{align*}
    \phi(x_{m,k}) = x_{m-1, \lceil k/2 \rceil}, \quad k\in J_{2^m}, m\in\nbb\setminus{\{1\}}.
\end{align*}
Then $X=\bigsqcup_{m\in\nbb} X_m$ (disjoint union) with $X_m = \{x_{m,k}\colon k\in J_{2^m}\}$. The graph illustrating $\cfw$ is presented in Figure \ref{fig:binary_tree}.

\begin{figure}[ht]
\begin{center}
\begin{tikzpicture}[scale=0.85, transform shape]
\tikzstyle{every node} = [circle,fill=gray!30]

\node (0)[font=\footnotesize, inner sep = 3pt, label={[fill=none]above:$0$}] at (0,0) {$0$};

\node (11)[font=\footnotesize, inner sep = 1pt, label={[fill=none]above:$1$}] at (2,1.25) {$x_{1,1}$};
\node (12)[font=\footnotesize, inner sep = 1pt, label={[fill=none]above:$1$}] at (2,-1.25) {$x_{1,2}$};

\node (21)[font=\footnotesize, inner sep = 1pt, label={[fill=none]above:$1$}] at (4.5,2.25) {$x_{2,1}$};
\node (22)[font=\footnotesize, inner sep = 1pt, label={[fill=none]above:$1$}] at (4.5,0.75) {$x_{2,2}$};
\node (23)[font=\footnotesize, inner sep = 1pt, label={[fill=none]above:$1$}] at (4.5,-0.75) {$x_{2,3}$};
\node (24)[font=\footnotesize, inner sep = 1pt, label={[fill=none]above:$1$}] at (4.5,-2.25) {$x_{2,4}$};

\node (31)[fill=none] at (6, 2.25) {$\cdots$};
\node (32)[fill=none] at (6, 0.75) {$\cdots$};
\node (33)[fill=none] at (6, -0.75) {$\cdots$};
\node (34)[fill=none] at (6, -2.25) {$\cdots$};

\draw[<-] (0) --(11);
\draw[<-] (0) --(12);

\draw[<-] (11) --(21);
\draw[<-] (11) --(22);
\draw[<-] (12) --(23);
\draw[<-] (12) --(24);

\draw[<-, dashed] (21) --(31);
\draw[<-, dashed] (22) --(32);
\draw[<-, dashed] (23) --(33);
\draw[<-, dashed] (24) --(34);

\draw[<-] (0) to[out=135, in=225, looseness=6] (0);
\end{tikzpicture}
\end{center}
\caption{\label{fig:binary_tree} A \emph{wco} induced by a uniform binary structure.}
\end{figure}

Let $w \colon X \to \rbb_+$ be defined by $w(0) = 0$ and $w(x_{m,k}) = 1$ for all $k\in J_{2^m}$ and $m\in\nbb$. Clearly, $\{w_l \neq 0\} \subseteq \bigcup_{m \Ge l} X_m$. For $x \in X_m$ with $m \Ge l$, we have $\phi^{-l}(\{x\}) = \{y \in X_{m+l} \colon \phi^l(y) = x\}$. Thus, in view of \cite[Prop. 79]{2018-lnim-budzynski-jablonski-jung-stochel},
\begin{align*}
    \hsf_l(x) = \sum_{y \in \phi^{-l}(\{x\})} |w_l(y)|^2 = 2^l \quad \text{a.e. } [\mu_l],\quad l \in \nbb.
\end{align*}
Since $\hsf_l$ is constant on $\{w_l \neq 0\}$, $\efwn{k}(\hsf_l) = \hsf_l$ a.e. $[\mu_k]$ for all $k,l \in \nbb$, making $\cfw$ $n$-centered for every $n\in\nbb$.

Now we locally perturb $\cfw$ by altering a single weight in $X_{n_0+1}$, with a fixed $n_0 \in\nbb$. Namely, we set $w(x_{n_0+1,1}) = a$ with $a > 0$ and $a \neq 1$, while leaving all other weights equal to 1. We have (see \cite[Eq. (6.5)]{2018-lnim-budzynski-jablonski-jung-stochel})
\begin{align*}
    \hsfn{1}(x_{n_0,1}) &= |w(x_{n_0+1,1})|^2 + |w(x_{n_0+1,2})|^2 = a^2 + 1, \\
    \hsfn{1}(x_{n_0,2}) &= |w(x_{n_0+1,3})|^2 + |w(x_{n_0+1,4})|^2 = 1 + 1 = 2.
\end{align*}
Clearly, $\phi^{-1}(\{x_{n_0-1,1}\}) = \{x_{n_0,1}, x_{n_0,2}\}$. Since $a^2 + 1 \neq 2$, $\efwn{1}(\hsfn{1}) = \hsfn{1}$ a.e. $[\mu_1]$ fails to hold, meaning $\cfw$ is not even 1-centered. Consequently, $\cfw$ is not $n$-centered for any other $n\in\nbb$.

Note that it is possible to restore $n$-centeredness by rebalancing other weights. We have two distinct approaches. For $a \Le \sqrt{2}$ we apply the ``sibling fix'', setting  $w(x_{n_0+1,2}) = \sqrt{2-a^2}$. This yields $\hsfn{1}(x_{n_0,1}) = a^2 + 2 - a^2 = 2$. Consequently, $\hsfn{1} = 2$ on $X_{n_0}$, and so $\efwn{n}(\hsfn{1}) = \hsfn{1}$ a.e. $[\mu_{n}]$. Furthermore, since all other weights were unchanged, $\hsf_l = 2^l$ everywhere relative to $\{w_l \neq 0\}$ for all $l \in \nbb$. Thus, $n$-centeredness, with any $n\in\nbb$, is restored. For $a>\sqrt{2}$, we apply the ``cousin fix'', leaving $w(x_{n_0+1,2}) = 1$ and increasing the value of $\hsfn{1}$ across all elements in $X_{n_0}$ that share the same image under $\phi^n$ as $x_{n_0,1}$ to satisfy
\begin{align*}
    |w(x_{n_0+1, 2j-1})|^2 + |w(x_{n_0+1, 2j})|^2 = a^2 + 1, \quad 2 \Le j \Le 2^{\min\{n, n_0\}}.
\end{align*}
Then $\cfw$ is $n$-centered. If $n\geqslant n_0$ and the ``cousin fix'' is applied to the whole of $X_{n_0}$, $\cfw$ becomes automatically $k$-centered for any $k\in\nbb$.
\end{exa}

\section{Related conditions}

Bounded centered {\em wco}'s have recently been characterised in \cite[Th. 6]{2026-arxiv-budzynski-01}. In this section we discuss various finite counterparts of conditions appearing in the above mentioned characterisation and their relevance to spectral $n$-centeredness and weak $n$-centerednes. For the readers convenience we provide a diagram elucidating the relations between the conditions in Figure \ref{fig:relations}.

\begin{figure}[ht]
\begin{center}
\begin{tikzpicture}[
    scale=1, transform shape,
    box/.style={draw, rectangle, rounded corners, fill=gray!20, minimum height=1.1cm, minimum width=3.8cm, align=center, font=\small},
    impl/.style={double, double equal sign distance, -{Implies}, thick, shorten >=2pt, shorten <=2pt},
    equiv/.style={double, double equal sign distance, {Implies}-{Implies}, thick, shorten >=2pt, shorten <=2pt}
]

% Nodes arranged exactly as a matrix: a_ij
% Row 1
\node[box] (c1)   at (0, 4) {{\tt C}$_{n+1}$};
\node[box] (fn)   at (6, 4) {{\tt F}$_n$};
% Row 2
\node[box] (cn)   at (0, 2) {{\tt C}$_n$};
\node[box] (gn)   at (6, 2) {{\tt G}$_n$};
% Row 3
\node[box] (snc)  at (0, 0) {spectrally \\[-0.5ex] $n$-centered};
\node[box] (swnc) at (6, 0) {spectrally weakly \\[-0.5ex] $n$-centered};

% Horizontal Arrows
% Row 1: Equivalence between C_{n+1} and F_n
\draw[equiv] (c1) -- (fn);

% Row 2: G_n implies C_n, but C_n does not imply G_n
\draw[impl] ([yshift=6pt]gn.west) -- ([yshift=6pt]cn.east);
\draw[impl] ([yshift=-6pt]cn.east) -- ([yshift=-6pt]gn.west) node[midway] {\Large $\times$};

% Row 3: spectrally n-centered implies weakly, but weakly does not imply spectrally n-centered
\draw[impl] ([yshift=6pt]snc.east) -- ([yshift=6pt]swnc.west);
\draw[impl] ([yshift=-6pt]swnc.west) -- ([yshift=-6pt]snc.east) node[midway] {\Large $\times$};

% Vertical Arrows (Right column - Equivalences)
\draw[equiv] (fn) -- (gn);
\draw[equiv] (gn) -- (swnc);

% Vertical Arrows (Left column - Implications & Non-implications)
% C_{n+1} and C_n
\draw[impl] ([xshift=-6pt]c1.south) -- ([xshift=-6pt]cn.north);
\draw[impl] ([xshift=6pt]cn.north) -- ([xshift=6pt]c1.south) node[midway] {\Large $\times$};

% spectrally n-centered and C_n
\draw[impl] ([xshift=-6pt]snc.north) -- ([xshift=-6pt]cn.south);
\draw[impl] ([xshift=6pt]cn.south) -- ([xshift=6pt]snc.north) node[midway] {\Large $\times$};

\end{tikzpicture}
\end{center}
\caption{\label{fig:relations} Diagram of relations between spectral centeredness properties and associated conditions.}
\end{figure}

The first of the conditions we discuss is the following  {\em cocycle condition}
\begin{enumerate}
\item[{\tt C}$_n$:] for all $k \in J_n$, $\prod_{j=1}^k \hsfn{1}\circ\phi^j = \hsf_k \circ\phi^k$ a.e. $[\mu_k]$.
\end{enumerate}
Since {\tt C}$_1$ is automatically satisfied for all densely defined {\em wco's}, and there are plenty of densely  defined {\em wco}'s that are not spectrally $1$-centered, condition {\tt C}$_n$ itself cannot be sufficient for spectral $n$-centeredness. However, as shown below, it is necessary.

\begin{pro}\label{work01}
Let $n \in \nbb$ and $\cfw \in \cfrak{n}$. If $\cfw$ is spectrally $n$-centered, then {\tt C}$_n$ is satisfied.
\end{pro}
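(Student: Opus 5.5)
The plan is to induct on $k\in J_n$. The engine is a recursive formula expressing $\hsf_{k+1}$ through $\hsf_k$ and $\efwn{k}(\hsfn{1})$, combined with condition (iii) of Theorem \ref{gary02} for $l=1$. That condition is available because $\cfw$ is spectrally $n$-centered, and it gives $\efwn{k}(\hsfn{1})=\hsfn{1}$ a.e. $[\mu_k]$ for $k\in J_n$. The case $k=1$ of {\tt C}$_n$ is trivial, since $\hsfn{1}\circ\phi=\hsfn{1}\circ\phi$.

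\emph{Step 1: the recursion.} For $k\in J_{n-1}$ I will show that
\begin{align*}
\hsf_{k+1}=\hsf_k\cdot\big(\efwn{k}(\hsfn{1})\circ\phi^{-k}\big)\quad\text{a.e. }[\mu].
\end{align*}
Fix $\varDelta\in\ascr$. Since $w_{k+1}=w_k\cdot(w\circ\phi^k)$, we can also write $|w_{k+1}|^2=|w|^2\,|w_k\circ\phi|^2$. Hence
\begin{align*}
\mu_{k+1}(\phi^{-(k+1)}(\varDelta))=\int_X\big(|w_k|^2\chi_\varDelta\circ\phi^k\big)\circ\phi\D\mu_w=\int_X \chi_\varDelta\circ\phi^k\,\hsfn{1}\D\mu_k .
\end{align*}
Next I replace $\hsfn{1}$ by $\efwn{k}(\hsfn{1})$; this is allowed because $\chi_\varDelta\circ\phi^k$ is $\phi^{-k}(\ascr)$-measurable. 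I then write $\efwn{k}(\hsfn{1})=g\circ\phi^k$ a.e. $[\mu_k]$ with $g=\efwn{k}(\hsfn{1})\circ\phi^{-k}$. The change-of-variables formula for $\hsf_k$ then gives $\int_\varDelta g\,\hsf_k\D\mu$, and uniqueness of the Radon--Nikodym derivative yields the recursion. All functions here are $\rbop$-valued, so no integrability issues arise.

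\emph{Step 2: transporting a.e.\ equalities.} I need the following observation. If $f_1=f_2$ a.e. $[\mu_k]$, then $f_1\circ\phi=f_2\circ\phi$ a.e. $[\mu_{k+1}]$. Indeed, the exceptional set is contained in $\phi^{-1}\big(\{f_1\neq f_2\}\cap\{w_k\neq 0\}\big)$, up to the zero set of $w_{k+1}$. That set is $\mu_w$-null because $\mu_w\circ\phi^{-1}\ll\mu$.

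\emph{Step 3: induction.} Assume $\hsf_k\circ\phi^k=\prod_{j=1}^k\hsfn{1}\circ\phi^j$ a.e. $[\mu_k]$. By Step 2, $\hsf_k\circ\phi^{k+1}=\prod_{j=2}^{k+1}\hsfn{1}\circ\phi^j$ a.e. $[\mu_{k+1}]$. By definition of $\efwn{k}(\cdot)\circ\phi^{-k}$ and by (iii), we have $(\efwn{k}(\hsfn{1})\circ\phi^{-k})\circ\phi^k=\efwn{k}(\hsfn{1})=\hsfn{1}$ a.e. $[\mu_k]$. Step 2 then gives $(\efwn{k}(\hsfn{1})\circ\phi^{-k})\circ\phi^{k+1}=\hsfn{1}\circ\phi$ a.e. $[\mu_{k+1}]$. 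Composing the recursion of Step 1 with $\phi^{k+1}$ is legitimate: it holds a.e. $[\mu]$, and $\mu_{k+1}\circ\phi^{-(k+1)}\ll\mu$. Multiplying the two identities then gives {\tt C}$_n$ at level $k+1$.

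I expect the main obstacle to be Step 1, namely establishing the recursion cleanly. The delicate points are the bookkeeping of the null sets $\{w_k=0\}$ and the fact that $\efwn{k}(\hsfn{1})\circ\phi^{-k}$ is defined only up to its normalisation on $\{\hsf_k=0\}$. That normalisation is harmless here, because the factor $\hsf_k$ kills it.
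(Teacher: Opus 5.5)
Your proposal is correct and is essentially the paper's own argument. It runs an induction on $k$ driven by the recursion $\hsf_{k+1}\circ\phi^{k+1}=\big(\efwn{k}(\hsfn{1})\circ\phi\big)\cdot\big(\hsf_k\circ\phi^{k+1}\big)$ a.e. $[\mu_{k+1}]$, combined with condition (iii) of Theorem \ref{gary02} for $l=1$ and the transport of a.e.\ identities from $\mu_k$ to $\mu_{k+1}$ along $\phi$. The only difference is that you derive the recursion and the transport step yourself (via the change-of-variables formula and $\mu_{k+1}\ll\mu_k\circ\phi^{-1}$), whereas the paper quotes them from external lemmas.
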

\begin{proof}
We proceed by induction. For $k=1$, the equality in {\tt C}$_n$ holds trivially. Suppose now that the equality in {\tt C}$_n$ holds for $k-1$ where $k \in J_n$ with $k \geqslant 2$. By \cite[Lem. 1]{2026-arxiv-budzynski-01}, \cite[Lem. 5]{2018-lnim-budzynski-jablonski-jung-stochel}, (iii), and $\{p_{\phi,w}^{[k]} \neq 0\} \subseteq \{p_{\phi,w}^{[k-1]} \neq 0\}$, we deduce
\begin{align}\label{bajki01}
    \hsf_k \circ \phi^k = (\hsfn{1} \circ \phi) \cdot (\hsf_{k-1} \circ \phi^k) \quad \text{a.e. } [\mu_k].
\end{align}
By the induction hypothesis and \cite[Lem. 5]{2018-lnim-budzynski-jablonski-jung-stochel}, we get $\hsf_{k-1} \circ \phi^k = \prod_{j=2}^k \hsfn{1} \circ \phi^j$ a.e. $[\mu_k]$. Substituting this back into \eqref{bajki01} yields $\hsf_k \circ \phi^k = \prod_{j=1}^k \hsfn{1} \circ \phi^j$ a.e. $[\mu_k]$, completing the induction step.
\end{proof}

A simple concrete example below shows that the reverse of Proposition \ref{work01} does not hold. 
\begin{exa}
Let $X=\{0, 1, 2, \ldots, 8\}$, $\ascr=2^X$, and $\mu$ be the counting measure. Let $\phi$ and $w$ be defined according to the assignment drawn in Figure \ref{fire01}.
\begin{figure}[ht]
\begin{center}
\begin{tikzpicture}[scale=0.8, transform shape]
\tikzstyle{every node} = [circle,fill=gray!30]

% Vertices with weights as labels above them
\node (0)[font=\footnotesize, inner sep = 3pt, label={[fill=none]above:$0$}] at (0,0) {$0$};

\node (1)[font=\footnotesize, inner sep = 3pt, label={[fill=none]above:$1$}] at (2,1.) {$1$};
\node (3)[font=\footnotesize, inner sep = 3pt, label={[fill=none]above:$1$}] at (4,1.) {$3$};
\node (5)[font=\footnotesize, inner sep = 3pt, label={[fill=none]above:$1$}] at (6,1.) {$5$};
\node (7)[font=\footnotesize, inner sep = 3pt, label={[fill=none]above:$\sqrt{2}$}] at (8,1.) {$7$};

\node (2)[font=\footnotesize, inner sep = 3pt, label={[fill=none]above:$1$}] at (2,-1.) {$2$};
\node (4)[font=\footnotesize, inner sep = 3pt, label={[fill=none]above:$1$}] at (4,-1.) {$4$};
\node (6)[font=\footnotesize, inner sep = 3pt, label={[fill=none]above:$1$}] at (6,-1.) {$6$};
\node (8)[font=\footnotesize, inner sep = 3pt, label={[fill=none]above:$\sqrt{3}$}] at (8,-1.) {$8$};

% Edges without labels
\draw[<-] (0) --(1);
\draw[<-] (1) --(3);
\draw[<-] (3) --(5);
\draw[<-] (5) --(7);

\draw[<-] (0) --(2);
\draw[<-] (2) --(4);
\draw[<-] (4) --(6);
\draw[<-] (6) --(8);

% Self-loop on 0 pointing leftwards
\draw[<-] (0) to[out=135, in=225, looseness=6] (0);

\end{tikzpicture}
\end{center}
\caption{\label{fire01}The graph illustrating $\cfw$ which satisfies {\tt C}$_2$ but is not $2$-centered.}
\end{figure}
Using \cite[Eq. (6.5)]{2018-lnim-budzynski-jablonski-jung-stochel}, we easily get $\hsfn{1}(7)=\hsfn{1}(8)=0$, $\hsfn{1}(5) = 2=\hsfn{1}(0)$, $\hsfn{1}(6) = 3$, and $\hsfn{1}(x) = 1$ for $x\in \{1, 2, 3, 4\}$. Moreover, $\hsfn{2}(0) = 2$, $\hsfn{2}(1) = 1$, $\hsfn{2}(2) = 1$, $\hsfn{2}(3) = 2$, $\hsfn{2}(4) = 3$, and $\hsfn{2}(x) = 0$ for $x \in \{5, 6, 7, 8\}$. Since $\hsfn{1}$ is constant on sets in $\phi^{-1}(\ascr)$, $\efwn{1}(\hsfn{1}) = \hsfn{1}$ a.e. $[\mu_1]$ holds. Similarly, $\hsfn{1}$ is constant on sets in $\phi^{-2}(\ascr)$ which yields, $\efwn{2}(\hsfn{1}) = \hsfn{1}$ a.e. $[\mu_2]$. Easy calculations show that the cocycle condition $\hsf_2 \circ \phi^2 = (\hsfn{1} \circ \phi) \cdot (\hsfn{1} \circ \phi^2)$ a.e. $[\mu_2]$ is satisfied as well. On the other hand, $\efwn{2}(\hsf_2) = \hsf_2$ fails to be satisfied. Specifically, $\hsfn{2}(3)=2\neq 3=\hsfn{2}(4)$ which means that $\hsfn{2}$ is not constant on $\phi^{-2}(\{0\})$ a.e. $[\mu_2]$. Thus condition (iii) of Theorem \ref{gary02} fails to hold and so $C_\phi$ is not $2$-centered.
\end{exa}

One of the consequences of the Proposition \ref{work01} is that the phase of the $n$-th power $\cfw^n$ of a spectrally $n$-centered $\cfw$ turns out to be the $n$-th power of the phase of $\cfw$, the phenomenon well-known from the bounded case.

\begin{pro}
Let $n \in \nbb$ and $\cfw \in \cfrak{n}$. Let $U = C_{\phi, u_{\phi, w}}$ be the phase of $\cfw$. If {\tt C}$_n$ holds, then the phase of $\cfw^n$ is $U^n$. Moreover, we have
\begin{align*}
    u_{\phi^n, p_{\phi, w}^{[n]}} = p_{\phi, u_{\phi, w}}^{[n]} \quad \text{a.e. } [\mu_n].
\end{align*}
In particular, the claim holds whenever $\cfw$ is spectrally $n$-centered.
\end{pro}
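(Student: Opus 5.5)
We first establish the weight identity and then deduce the statement about the phase. By the preliminaries, $U^n$ is the weighted composition operator with symbol $\phi^n$ and weight $p_{\phi,u_{\phi,w}}^{[n]}$ (\cite[Lem. 26 and 44]{2018-lnim-budzynski-jablonski-jung-stochel}). Since $\cfw\in\cfrak{n}$, the phase of $\cfw^n=C_{\phi^n,w_n}$ has symbol $\phi^n$ and weight $u_{\phi^n,w_n}=w_n/\sqrt{\hsf_n\circ\phi^n}$, where $w_n=p_{\phi,w}^{[n]}$. Two weighted composition operators with the same symbol coincide as operators on $L^2(\mu)$ once their weights agree a.e. $[\mu]$. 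It therefore suffices to compare the weights.

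Both weights vanish a.e. off $\{w_n\neq 0\}$. For $u_{\phi^n,w_n}$ this is immediate. For $p_{\phi,u_{\phi,w}}^{[n]}=\prod_{j=0}^{n-1}w\circ\phi^j/\sqrt{\hsfn{1}\circ\phi^{j+1}}$ the numerator is exactly $w_n$. For this to make sense we need $0<\hsfn{1}\circ\phi^{j+1}<\infty$ a.e. on $\{w_n\neq0\}$. I would obtain this from the standard facts $\hsfn{1}\circ\phi>0$ a.e. $[\mu_w]$ and $\hsfn{1}<\infty$ a.e. $[\mu]$, pushed forward with \cite[Lem. 5]{2018-lnim-budzynski-jablonski-jung-stochel}. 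On $\{w_n\neq0\}$ we then have
\begin{align*}
p_{\phi,u_{\phi,w}}^{[n]}=\frac{w_n}{\sqrt{\prod_{j=1}^{n}\hsfn{1}\circ\phi^{j}}},
\end{align*}
and condition {\tt C}$_n$ with $k=n$ gives $\prod_{j=1}^n\hsfn{1}\circ\phi^j=\hsf_n\circ\phi^n$ a.e. $[\mu_n]$. Since $\mu_n$ and $\mu$ have the same null sets inside $\{w_n\neq0\}$, the two weights agree a.e. $[\mu]$ there. Hence they agree a.e. $[\mu_n]$, and in fact a.e. $[\mu]$ once both are set to zero off $\{w_n\neq0\}$. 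This proves the displayed identity and $U^n=V_n$, where $V_n$ is the phase of $\cfw^n$.

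The main obstacle is the bookkeeping of null sets and zero divisions. We must check that the a.e. $[\mu_n]$ equality from {\tt C}$_n$ suffices, i.e. that the region $\{w_n=0\}$, where $\mu_n$ gives no information, contributes nothing. The weight expressions vanish there, which handles it. I would also double-check that the domains agree, not just the formulas: both operators are phases, hence bounded partial isometries with domain $L^2(\mu)$, so equality of the weights gives equality of the operators. Finally, the last sentence of the proposition follows directly from Proposition \ref{work01}, since spectral $n$-centeredness implies {\tt C}$_n$.
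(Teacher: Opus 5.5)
Your proposal is correct and follows the paper's proof. You identify $U^n=C_{\phi^n,p_{\phi,u_{\phi,w}}^{[n]}}$ and the phase of $\cfw^n=C_{\phi^n,w_n}$ as $C_{\phi^n,u_{\phi^n,w_n}}$, rewrite $p_{\phi,u_{\phi,w}}^{[n]}$ as $w_n/\sqrt{\prod_{j=1}^n\hsfn{1}\circ\phi^j}$, and substitute {\tt C}$_n$ with $k=n$. Your extra bookkeeping makes explicit what the paper leaves implicit: positivity and finiteness of $\hsfn{1}\circ\phi^{j+1}$ on $\{w_n\neq 0\}$, and the fact that both weights vanish off $\{w_n\neq 0\}$, so the a.e. $[\mu_n]$ identity upgrades to a.e. $[\mu]$ and hence to equality of the operators.
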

\begin{proof}
Write $w_n = p_{\phi, w}^{[n]}$ and $u=u_{\phi, w}$. Then $U^n = C_{\phi^n, p_{\phi, u}^{[n]}}$ and
\begin{align*}
    p_{\phi, u}^{[n]} = \prod_{j=0}^{n-1} u \circ \phi^j = \prod_{j=0}^{n-1} \frac{w \circ \phi^j}{\sqrt{\hsfn{1} \circ \phi^{j+1}}} = \frac{\prod_{j=0}^{n-1} w \circ \phi^j}{\sqrt{\prod_{j=0}^{n-1} \hsfn{1} \circ \phi^{j+1}}} = \frac{w_n}{\sqrt{\prod_{j=1}^{n} \hsfn{1} \circ \phi^j}} \quad \text{a.e. } [\mu_n].
\end{align*}
On the other hand, since $\cfw \in \cfrak{n}$, $\cfw^n = C_{\phi^n, w_n}$. Hence, weight of its phase is
\begin{align}\label{eq:phase_power_direct}
    u_{\phi^n, w_n} = \frac{w_n}{\sqrt{\hsfn{n} \circ \phi^n}} \quad \text{a.e. } [\mu_n].
\end{align}
Assuming {\tt C}$_n$ holds, we have $\hsfn{n} \circ \phi^n = \prod_{j=1}^n \hsfn{1} \circ \phi^j$ a.e. $[\mu_n]$. Substituting this into \eqref{eq:phase_power_direct}, we deduce $u_{\phi^n, w_n} = p_{\phi, u}^{[n]}$ a.e. $[\mu_n]$. This completes the proof.
\end{proof}

As one can expect, the cocycle condition is related to $\phi^{-k}(\ascr)$-measurability of $\hsfn{1}$.

\begin{pro}\label{bydle}
Let $k\in\nbb$ and $\cfw \in \cfrak{k+1}$. Consider the following conditions:
\begin{itemize}
\item[(i)] for $n \in \{k, k+1\}$, $\hsf_n \circ \phi^n = \prod_{j=1}^n \hsfn{1} \circ \phi^j$ a.e. $[\mu_n]$,
\item[(ii)] $\efwn{k}(\hsfn{1}) = \hsfn{1}$ a.e. $[\mu_k]$.
\end{itemize}
Then (i) implies (ii).
\end{pro}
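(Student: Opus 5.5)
The plan is to combine the two cocycle identities in (i) into a single relation at level $k+1$, and compare it with a general identity linking $\hsf_{k+1}$, $\hsf_k$ and $\efwn{k}(\hsfn{1})$. As in Lemma \ref{jana-global} and Lemma \ref{gary01}, we first reduce to $w\Ge 0$, using \eqref{jana04}. Write $w_m=p_{\phi,w}^{[m]}$.

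\emph{Step 1: the ratio of the cocycles.} Compose the cocycle for $n=k$ with $\phi$, transferring ``a.e. $[\mu_k]$'' to ``a.e. $[\mu_{k+1}]$'' via \cite[Lem. 5]{2018-lnim-budzynski-jablonski-jung-stochel}; this is legitimate because $w_{k+1}=w\cdot(w_k\circ\phi)$. This gives $\hsf_k\circ\phi^{k+1}=\prod_{j=2}^{k+1}\hsfn{1}\circ\phi^j$ a.e. $[\mu_{k+1}]$. Inserting it into the cocycle for $n=k+1$ yields
\begin{align*}
\hsf_{k+1}\circ\phi^{k+1}=(\hsfn{1}\circ\phi)\cdot(\hsf_k\circ\phi^{k+1})\quad\text{a.e. }[\mu_{k+1}].
\end{align*}

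\emph{Step 2: a general identity.} Next I prove, with no centeredness assumption, that
\begin{align*}
\hsf_{k+1}=\hsf_k\cdot\big(\efwn{k}(\hsfn{1})\circ\phi^{-k}\big)\quad\text{a.e. }[\mu].
\end{align*}
This should also follow from \cite[Lem. 1]{2026-arxiv-budzynski-01}. Directly, for $\varDelta\in\ascr$ one computes
\begin{align*}
\mu_{k+1}(\phi^{-(k+1)}(\varDelta))
&=\int_X(\chi_\varDelta\circ\phi^k)\circ\phi\,|w_k\circ\phi|^2\D\mu_w
=\int_X \chi_\varDelta\circ\phi^k\,\hsfn{1}\D\mu_k\\
&=\int_X \chi_\varDelta\circ\phi^k\,\efwn{k}(\hsfn{1})\D\mu_k
=\int_\varDelta \hsf_k\,\big(\efwn{k}(\hsfn{1})\circ\phi^{-k}\big)\D\mu.
\end{align*}
Composing this identity with $\phi^{k+1}$ and using $(g\circ\phi^{-k})\circ\phi^k=g$ a.e. $[\mu_k]$ for $g=\efwn{k}(\hsfn{1})$, we get
\begin{align*}
\hsf_{k+1}\circ\phi^{k+1}=(\hsf_k\circ\phi^{k+1})\cdot\big(\efwn{k}(\hsfn{1})\circ\phi\big)\quad\text{a.e. }[\mu_{k+1}].
\end{align*}

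\emph{Step 3: cancellation and push-forward.} Since $\hsf_k\circ\phi^k\in(0,\infty)$ a.e. $[\mu_k]$, the function $\hsf_k\circ\phi^{k+1}$ is positive and finite a.e. $[\mu_{k+1}]$. Comparing Steps 1 and 2 and cancelling gives $\hsfn{1}\circ\phi=\efwn{k}(\hsfn{1})\circ\phi$ a.e. $[\mu_{k+1}]$. The push-forward of $\mu_{k+1}$ under $\phi$ equals $\hsfn{1}\,\D\mu_k$, by the same computation as in Step 2. Hence $\hsfn{1}=\efwn{k}(\hsfn{1})$ a.e. $[\mu_k]$ on $\{\hsfn{1}>0\}$.

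\emph{Main obstacle: the zero set $Z=\{\hsfn{1}=0\}$.} It remains to show $\efwn{k}(\hsfn{1})=0$ a.e. $[\mu_k]$ on $Z$, and Step 3 carries no information there. I will try two routes, in this order.
\begin{itemize}
\item[(a)] Integrate Step 2 against $\phi^{-k}$-sets and use the cocycle for $n=k$ itself, not only its composition with $\phi$. The aim is to show that the $\mu_k$-mass of $\efwn{k}(\hsfn{1})$ carried by $Z$ is zero.
\item[(b)] Use the fact that both sides of the final equality are unaffected by modifications off $\{w_k\neq0\}$. Then check, via \cite[Lem. 5]{2018-lnim-budzynski-jablonski-jung-stochel}, that the total $\mu_k$-integrals of $\hsfn{1}$ and $\efwn{k}(\hsfn{1})$ against $\chi_\varDelta\circ\phi^k$ coincide. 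These integrals are equal by the defining property of $\efwn{k}$. Together with the equality on $\{\hsfn{1}>0\}$, this would force the integral of $\efwn{k}(\hsfn{1})\chi_Z\chi_\varDelta\circ\phi^k$ to vanish. Since $\efwn{k}(\hsfn{1})\Ge0$, that would give the claim, but only if $Z$ can be related to a $\phi^{-k}(\ascr)$-set, for which I would again invoke the level-$k$ cocycle.
\end{itemize}
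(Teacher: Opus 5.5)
Your Steps 1--3 coincide with the paper's proof. The paper obtains your Step~2 identity from \cite[Lem.~1]{2026-arxiv-budzynski-01} and \cite[Lem.~5]{2018-lnim-budzynski-jablonski-jung-stochel}, and your direct derivation of it is correct. It then inserts both cocycles and cancels $\prod_{j=2}^{k+1}\hsfn{1}\circ\phi^j$ rather than $\hsf_k\circ\phi^{k+1}$, which is the same thing after your Step~1. This yields $\hsfn{1}=\efwn{k}(\hsfn{1})$ a.e.\ $[\mu_k]$ on $\{\hsfn{1}>0\}$. The reduction to $w\Ge 0$ is harmless but not needed.

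The proposal does stop short of a proof at the zero set $Z=\{\hsfn{1}=0\}$, and the cause is a misplaced worry. Your route~(b) is exactly the paper's argument, and it needs neither a relation between $Z$ and $\phi^{-k}(\ascr)$ nor the level-$k$ cocycle. Only the set over which you integrate must lie in $\phi^{-k}(\ascr)$, since that is what the defining property of $\efwn{k}$ requires. The splitting along $Z$ happens inside the integrand, by plain additivity of the integral. Suppose $\int_B\chi_Z\,\efwn{k}(\hsfn{1})\D\mu_k=0$ for some $B\in\phi^{-k}(\ascr)$. Nonnegativity of the integrand then already gives $\efwn{k}(\hsfn{1})=0$ a.e.\ $[\mu_k]$ on $B\cap Z$, with no measurability of $Z$ entering.

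What you do have to supply is finiteness, because obtaining that vanishing means cancelling $\int_{B\cap\{\hsfn{1}>0\}}\hsfn{1}\D\mu_k$ from both sides. The paper handles this by truncation:
\begin{itemize}
\item[$\bullet$] Take $B=\varDelta_m=\varDelta\cap\{\efwn{k}(\hsfn{1})\Le m\}$, with $\varDelta\in\phi^{-k}(\ascr)$ and $\mu_k(\varDelta)<\infty$.
\item[$\bullet$] Then $\varDelta_m\in\phi^{-k}(\ascr)$ and $\int_{\varDelta_m}\hsfn{1}\D\mu_k=\int_{\varDelta_m}\efwn{k}(\hsfn{1})\D\mu_k\Le m\,\mu_k(\varDelta_m)<\infty$.
\end{itemize}
Now let $m\to\infty$ and let $\varDelta$ run through a sequence exhausting $X$. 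This is possible because $\mu_k$ is $\sigma$-finite on $\phi^{-k}(\ascr)$, as $\hsf_k<\infty$ a.e.\ $[\mu]$. You also need the set $\{\efwn{k}(\hsfn{1})=\infty\}$ to be $\mu_k$-null. This follows, for instance, from your Step~2, since $\hsf_{k+1}<\infty$ a.e.\ $[\mu]$ and $\mu_k\circ\phi^{-k}\ll\mu$. Route~(a) is unnecessary.
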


\begin{proof}
Write $w_m = p_{\phi,w}^{[m]}$. By \cite[Lem 1]{2026-arxiv-budzynski-01} and \cite[Lem. 5]{2018-lnim-budzynski-jablonski-jung-stochel}, we have
\begin{align*}
    \hsf_{k+1} \circ \phi^{k+1} = \efwn{k}(\hsfn{1}) \circ \phi \cdot (\hsf_k \circ \phi^{k+1}) \quad \text{a.e. } [\mu_{k+1}].
\end{align*}
Therefore, using (i), we get
\begin{align*}
    \prod_{j=1}^{k+1} \hsfn{1} \circ \phi^j = \efwn{k}(\hsfn{1}) \circ \phi \cdot \prod_{j=2}^{k+1} \hsfn{1} \circ \phi^j \quad \text{a.e. } [\mu_{k+1}].
\end{align*}
Since $\hsfn{1}\circ\phi^{j+1}>0$ a.e. $[\mu_j]$ (use \cite[Lem. 6 and 26]{2018-lnim-budzynski-jablonski-jung-stochel}), we deduce
\begin{align*}
    \hsfn{1} \circ \phi = \efwn{k}(\hsfn{1}) \circ \phi \quad \text{a.e. } [\mu_{k+1}].
\end{align*}
By \cite[Lem. 5]{2018-lnim-budzynski-jablonski-jung-stochel},  $\hsfn{1} = \chi_{\{\hsfn{1}>0\}}\efwn{k}(\hsfn{1})$ a.e. $[\mu_{k}]$. We now show that $\efwn{k}(\hsfn{1})=0$ a.e. $[\mu_{k}]$ on $\{\hsfn{1}=0\}$. For any $m\in\nbb$, $\big\{\efwn{k}(\hsfn{1}) \Le m\big\}\in\phi^{-k}(\ascr)$. Let $\varDelta\in \phi^{-k}(\ascr)$ satisfy $\mu_k(\varDelta)<\infty$. Set $\varDelta_m= \varDelta\cap \big\{\efwn{k}(\hsfn{1}) \Le m\big\}$. Then
\begin{align*}
    \int_{\varDelta_m} \hsfn{1} \D\mu_k = \int_{\varDelta_m} \efwn{k}(\hsfn{1}) \D\mu_k \Le m \cdot \mu_k(\varDelta_m) < \infty.
\end{align*}
Moreover, we have
\begin{align*}
    \int_{\varDelta_m \cap \{\hsfn{1}>0\}} \efwn{k}(\hsfn{1}) \D\mu_k &+ \int_{\varDelta_m \cap \{\hsfn{1}=0\}} \efwn{k}(\hsfn{1}) \D\mu_k = \int_{\varDelta_m} \efwn{k}(\hsfn{1}) \D\mu_k = \int_{\varDelta_m} \hsfn{1} \D\mu_k \\
    &= \int_{\varDelta_m \cap \{\hsfn{1}>0\}} \hsfn{1} \D\mu_k + \int_{\varDelta_m \cap \{\hsfn{1}=0\}} \hsfn{1} \D\mu_k = \int_{\varDelta_m \cap \{\hsfn{1}>0\}} \hsfn{1} \D\mu_k.
\end{align*}
Since $\efwn{k}(\hsfn{1}) = \hsfn{1}$ a.e. $[\mu_k]$ on $\{\hsfn{1}>0\}$, we conclude that $\efwn{k}(\hsfn{1})= 0$ a.e. $[\mu_k]$ on $\varDelta_m \cap \{\hsfn{1}=0\}$. Using $\sigma$-finiteness of $\mu_k|_{\phi^{-k}(\ascr)}$ we get $\efwn{k}(\hsfn{1}) = 0 = \hsfn{1}$ a.e. $[\mu_k]$ on $\{\hsfn{1}=0\}$. Thus, $\efwn{k}(\hsfn{1}) = \hsfn{1}$ a.e. $[\mu_k]$.
\end{proof}

In view of Proposition \ref{bydle}, {\tt C}$_{n+1}$ yields 
\begin{itemize}
\item[{\tt F}$_n$:] for every $k\in J_n$, $\hsfn{1}=\efwn{k}(\hsfn{1})$ a.e. $[\mu_{k}]$,
\end{itemize}
which is a finite counterpart of another condition appearing in \cite[Th. 6]{2026-arxiv-budzynski-01}. Clearly, by Lemma \ref{gary01}, condition {\tt F}$_n$ is equivalent to
\begin{itemize}
    \item for every $k\in J_n$, $\pfwn{k} \mno{1}\subseteq \mno{1}\pfwn{k}$.
\end{itemize}
Moreover, by inspecting the proof of Theorem \ref{gary02}, one can deduce the following.
\begin{pro}\label{wnc}
Let $n\in\nbb$ and $\cfw\in \cfrak{n}$. Then $\cfw$ is spectrally weakly $n$-centered if and only if {\tt F}$_n$ holds.
\end{pro}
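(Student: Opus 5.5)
The plan is to mirror the proof of Theorem \ref{gary02}, now tracking only the pairs $(E_{1,1},E_{2,k})$, $k\in J_n$. By Lemma \ref{jana-global}, more precisely by the part of its proof treating the pairs $E_{1,k}$, $E_{2,m}$, commutativity of $E_{1,1}(\sigma)$ with $E_{2,k}(\omega)$ is invariant under replacing $w$ by $|w|$. Condition {\tt F}$_n$ depends only on $\hsfn{1}$, $\efwn{k}$ and $\mu_k$, which are unchanged by \eqref{jana04}. Hence we may assume $w\geqslant 0$ a.e. $[\mu]$. As in the proof of Theorem \ref{gary02}, \eqref{jan02} gives $\pfwn{k}=E_{2,k}\big((0,\infty)\big)$.

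\emph{Necessity.} Assume $\cfw$ is spectrally weakly $n$-centered and fix $k\in J_n$. Then $E_{2,k}\big((0,\infty)\big)=\pfwn{k}$ commutes with every $E_{1,1}(\sigma)$. Since $E_{1,1}$ is the spectral measure of $\mno{1}$, $\pfwn{k}$ spectrally commutes with $\mno{1}$, i.e.\ $\pfwn{k}\mno{1}\subseteq\mno{1}\pfwn{k}$. Lemma \ref{gary01} with $l=1$ then yields $\efwn{k}(\hsfn{1})=\hsfn{1}$ a.e. $[\mu_k]$. This is {\tt F}$_n$.

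\emph{Sufficiency.} Assume {\tt F}$_n$ and fix $k\in J_n$ and $\sigma,\omega\in\borel{\rbb}$. By Lemma \ref{gary01}, $\pfwn{k}\mno{1}\subseteq\mno{1}\pfwn{k}$, so $\pfwn{k}$ commutes with the spectral projections $E_{1,1}(\sigma)=M_{\chi_{\varOmega_{\sigma,1}}}$. By \eqref{jan02},
\begin{align*}
E_{2,k}(\omega)=M_{\chi_{\varGamma_{\omega,k}}}\pfwn{k}+\chi_\omega(0)(I-\pfwn{k}).
\end{align*}
Here $E_{1,1}(\sigma)$ commutes with $M_{\chi_{\varGamma_{\omega,k}}}$, since both are bounded multiplication operators, and it also commutes with $\pfwn{k}$ and $I-\pfwn{k}$. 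Therefore $E_{1,1}(\sigma)E_{2,k}(\omega)=E_{2,k}(\omega)E_{1,1}(\sigma)$, which is spectral weak $n$-centeredness.

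Unlike the $i=j=2$ case of Theorem \ref{gary02}, no measurability transfer via \cite[Lem. 5]{2018-lnim-budzynski-jablonski-jung-stochel} is needed, because only $E_{1,1}$ has to commute with the $E_{2,k}$. The only delicate step is the reduction to $w\geqslant 0$. There one must check that the unitary conjugation in \eqref{jana03} preserves exactly the relevant mixed commutation relations, and the computation in the proof of Lemma \ref{jana-global} already provides this. The remaining steps are direct applications of Lemma \ref{gary01} and the spectral description \eqref{jan02}.
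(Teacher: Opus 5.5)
Your proof is correct and is exactly the argument the paper has in mind when it says the result follows by inspecting the proof of Theorem \ref{gary02}. You keep only the pairs $(E_{1,1},E_{2,k})$, use $\pfwn{k}=E_{2,k}\big((0,\infty)\big)$ together with Lemma \ref{gary01} for $l=1$ in both directions, and use the decomposition \eqref{jan02} for sufficiency; the preliminary reduction to $w\geqslant 0$ is harmless but not strictly needed, since \eqref{jan02} and Lemma \ref{gary01} are already stated for complex weights.
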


The relation between the cocycle condition {\tt C}$_n$ and the $\phi^{-k}(\ascr)$-measurability of $\hsfn{1}$ is a delicate one. Below, for a fixed $m \in \nbb$, we construct $\cfw$ such that the cocycle condition holds for $k \in J_{m}$, the measurability condition $\efwn{k}(\hsfn{1}) = \hsfn{1}$ a.e. $[\mu_k]$ holds for $k\in J_{m-1}$, but $\efwn{m}(\hsfn{1}) = \hsfn{1}$ a.e. $[\mu_m]$ fails.
\begin{exa}
Let $m\in\nbb$. Let $X = \{0\} \cup \big\{(i,j) \colon i \in \{1,2\}, j \in \nbb\big\}$, let $\ascr = 2^X$, and let $\mu$ be the counting measure. Let $\phi \colon X \to X$ and $w \colon X \to \rbb_+$ be defined according to the assignment drawn in Figure \ref{fig:measurability_gap_general}.

%Specifically, we have $\phi(0) = 0$, $\phi((i,1)) = 0$, and $\phi((i,j)) = (i, j-1)$ for $j \geqslant 2$. The weight function is defined as $w(0)=0$, $w((1,1)) = w((2,1)) = \frac{1}{\sqrt{2}}$, $w((1,m+1)) = \sqrt{2}$, $w((2,m+1)) = \sqrt{3}$, and $w(x) = 1$ for all other $x \in X$.

\begin{figure}[ht]
\begin{center}
\begin{tikzpicture}[scale=0.8, transform shape]
\tikzstyle{every node} = [circle,fill=gray!30]

\node (0)[font=\footnotesize, inner sep = 3pt, label={[fill=none]above:$0$}] at (0,0) {$0$};

\node (11)[font=\footnotesize, inner sep = 2pt, label={[fill=none]above:$\frac{1}{\sqrt{2}}$}] at (2,1.) {$(1,1)$};
\node (1dots)[fill=none] at (3.5,1.) {$\cdots$};
\node (1m1)[font=\footnotesize, inner sep = 0pt, label={[fill=none]above:$1$}] at (5,1.) {$(1,m\!-\!1)$};
\node (1m)[font=\footnotesize, inner sep = 1pt, label={[fill=none]above:$1$}] at (7.5,1.) {$(1,m)$};
\node (1m1p)[font=\footnotesize, inner sep = 0pt, label={[fill=none]above:$\sqrt{2}$}] at (10,1.) {$(1,m\!+\!1)$};
\node (1m2p)[font=\footnotesize, inner sep = 0pt, label={[fill=none]above:$1$}] at (12.5,1.) {$(1,m\!+\!2)$};
\node (1inf)[fill=none] at (14,1.) {$\cdots$};

\node (21)[font=\footnotesize, inner sep = 2pt, label={[fill=none]above:$\frac{1}{\sqrt{2}}$}] at (2,-1.) {$(2,1)$};
\node (2dots)[fill=none] at (3.5,-1.) {$\cdots$};
\node (2m1)[font=\footnotesize, inner sep = 0pt, label={[fill=none]above:$1$}] at (5,-1.) {$(2,m\!-\!1)$};
\node (2m)[font=\footnotesize, inner sep = 1pt, label={[fill=none]above:$1$}] at (7.5,-1.) {$(2,m)$};
\node (2m1p)[font=\footnotesize, inner sep = 0pt, label={[fill=none]above:$\sqrt{3}$}] at (10,-1.) {$(2,m\!+\!1)$};
\node (2m2p)[font=\footnotesize, inner sep = 0pt, label={[fill=none]above:$1$}] at (12.5,-1.) {$(2,m\!+\!2)$};
\node (2inf)[fill=none] at (14,-1.) {$\cdots$};

\draw[<-] (0) --(11);
\draw[<-, dashed] (11) --(1dots);
\draw[<-, dashed] (1dots) --(1m1);
\draw[<-] (1m1) --(1m);
\draw[<-] (1m) --(1m1p);
\draw[<-] (1m1p) --(1m2p);
\draw[<-, dashed] (1m2p) --(1inf);

\draw[<-] (0) --(21);
\draw[<-, dashed] (21) --(2dots);
\draw[<-, dashed] (2dots) --(2m1);
\draw[<-] (2m1) --(2m);
\draw[<-] (2m) --(2m1p);
\draw[<-] (2m1p) --(2m2p);
\draw[<-, dashed] (2m2p) --(2inf);

\draw[<-] (0) to[out=135, in=225, looseness=6] (0);
\end{tikzpicture}
\end{center}
\caption{\label{fig:measurability_gap_general} The graph illustrating $\cfw$ which satisfies {\tt C}$_m$, but $\hsfn{1}$ fails to be $\phi^{-m}(\ascr)$-measurabile.}
\end{figure}

Using \cite[Eq. (6.5)]{2018-lnim-budzynski-jablonski-jung-stochel}, one shows that 
\begin{align*}
    \hsfn{1}(x) &= 1,\quad x\in X\setminus \{ (1,m), (2,m)\},
\end{align*}
and
\begin{align*}
    \hsfn{1}((1,m)) &= |w((1,m+1))|^2 = 2, \\
    \hsfn{1}((2,m)) &= |w((2,m+1))|^2 = 3.
\end{align*}
We also have
\begin{align*}
\text{$\phi^k((1,m)) = (1, m-k)$ and $\phi^k((2,m)) = (2, m-k)$}\quad  k\in J_{m-1}
\end{align*}
Since $m-k \geqslant 1$, $\phi^{-k}(\{(1,m-k)\})\cap \phi^{-k}(\{(2,m-k)\})=\varnothing$. Thus $\hsfn{1}$ is constant (up to $\mu_k$) on sets $\phi^{-k}(x)$. Consequently, $\efwn{k}(\hsfn{1}) = \hsfn{1}$ a.e. $[\mu_k]$ for $k \in J_{m-1}$. 

We now show that $\efwn{m}(\hsfn{1}) = \hsfn{1}$ a.e. $[\mu_m]$ fails. Both $(1,m)$ and $(2,m)$ belong to $\phi^{-m}(\{0\})$. Furthermore, $w_m((i,m)) = \prod_{j=0}^{m-1} w((i, m-j)) = 1 \cdots 1 \cdot \frac{1}{\sqrt{2}} \neq 0$ for $i=1, 2$. However, $\hsfn{1}((1,m)) = 2 \neq 3 = \hsfn{1}((2,m))$. Hence,  $\hsfn{1}$ is not constant on $\phi^{-m}(\{0\})$ and so $\efwn{m}(\hsfn{1}) = \hsfn{1}$ a.e. $[\mu_m]$ is not satisfied.

Finally, we show that the cocycle condition {\tt C}$_m$ is satisfied. Set $w_k = p_{\phi,w}^{[k]}$. Fix $k\in J_m$ and $x \in X$ such that $w_k(x) \neq 0$. Then $x = (i, j)$ for $j \geqslant k$ (otherwise $\phi^{k-1}(x)=0$ and thus $w_k(x)$ vanishes). If $j=k$, we have $\phi^k(x) = 0$ and the right-hand side of the equality in {\tt C}$_m$ is 
\begin{align*}
\hsfn{k} \circ \phi^k(x)=\hsfn{k}(0) = \sum_{y \in \phi^{-k}(\{0\})} |w_k(y)|^2
\end{align*}
Since $\phi^{-k}(\{0\})=\{0\}\cup \{(i,j)\colon i=1,2, \ j\in J_{k}\}$ and $w_k((i,j))=0$ for $i=1,2$ and $j\in J_{k-1}$, we see that 
\begin{align*}
\hsfn{k} \circ \phi^k(x)=|w_k((1,k))|^2 + |w_k((2,k))|^2 = \frac{1}{2} + \frac{1}{2} = 1.
\end{align*}
The left-hand side is 
\begin{align*}
\prod_{l=1}^k \hsfn{1} \circ \phi^l(x)=\prod_{l=1}^k \hsfn{1}((i,k-l))=1.
\end{align*}
Combining the two above we settle the case $j=k$. If $j>k$, $\phi^k(x) = (i, j-k)$. Thus the right-hand side of the equality in {\tt C}$_m$ is
\begin{align*}
    \hsfn{k} \circ \phi^k(x) = \hsfn{k}((i, j-k)) = |w_k((i,j))|^2 
    = \prod_{l=0}^{k-1} |w(\phi^l((i,j)))|^2 = \prod_{l=0}^{k-1} |w((i, j-l))|^2.
\end{align*}
The left-hand side is 
\begin{align*}
    \prod_{l=1}^k \hsfn{1} \circ \phi^l(x) = \prod_{l=1}^k \hsfn{1}((i, j-l)) = \prod_{l=1}^k |w((i, j-l+1))|^2 
    = \prod_{r=0}^{k-1} |w((i, j-r))|^2.
\end{align*}
This settles the case $j>k$ and completes the proof that {\tt C}$_m$ is satisfied.
\end{exa}
On the positive note we have the following.
\begin{pro}\label{fn_implies_cn1}
Let $n \in \nbb$ and $\cfw \in \cfrak{n+1}$. Then {\tt F}$_n$ is equivalent to {\tt C}$_{n+1}$.
\end{pro}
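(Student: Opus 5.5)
The proof has two directions, and both rest on the identity already used in Propositions \ref{work01} and \ref{bydle}. For $k\in\nbb$ with $\cfw\in\cfrak{k+1}$, \cite[Lem. 1]{2026-arxiv-budzynski-01} and \cite[Lem. 5]{2018-lnim-budzynski-jablonski-jung-stochel} give
\begin{align*}
\hsf_{k+1}\circ\phi^{k+1}=\efwn{k}(\hsfn{1})\circ\phi\cdot(\hsf_k\circ\phi^{k+1})\quad \text{a.e. } [\mu_{k+1}].
\end{align*}
I will argue by induction on the index, separately for each implication.

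\emph{{\tt C}$_{n+1}\Rightarrow${\tt F}$_n$.} Fix $k\in J_n$. Condition {\tt C}$_{n+1}$ contains the cocycle equalities for both $k$ and $k+1$, since both lie in $J_{n+1}$. This is exactly hypothesis (i) of Proposition \ref{bydle}, and $\cfw\in\cfrak{n+1}\subseteq\cfrak{k+1}$. Proposition \ref{bydle} therefore gives $\efwn{k}(\hsfn{1})=\hsfn{1}$ a.e. $[\mu_k]$. Since $k\in J_n$ was arbitrary, {\tt F}$_n$ holds.

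\emph{{\tt F}$_n\Rightarrow${\tt C}$_{n+1}$.} I will show by induction on $k\in J_{n+1}$ that $\hsf_k\circ\phi^k=\prod_{j=1}^k\hsfn{1}\circ\phi^j$ a.e. $[\mu_k]$. The case $k=1$ is trivial. Suppose the equality holds for some $k\in J_n$. By {\tt F}$_n$, $\efwn{k}(\hsfn{1})=\hsfn{1}$ a.e. $[\mu_k]$. I then need to transport this to $\efwn{k}(\hsfn{1})\circ\phi=\hsfn{1}\circ\phi$ a.e. $[\mu_{k+1}]$. This follows from \cite[Lem. 5]{2018-lnim-budzynski-jablonski-jung-stochel}: since $w_{k+1}=w\cdot(w_k\circ\phi)$, a $\mu_k$-null set pulls back under $\phi$ to a $\mu_{k+1}$-null set. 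Substituting into the displayed identity gives
\begin{align*}
\hsf_{k+1}\circ\phi^{k+1}=(\hsfn{1}\circ\phi)\cdot(\hsf_k\circ\phi^{k+1})\quad \text{a.e. } [\mu_{k+1}].
\end{align*}
Next I apply the induction hypothesis, composed with $\phi$, and transported from $\mu_k$ to $\mu_{k+1}$ by the same lemma. This gives $\hsf_k\circ\phi^{k+1}=\prod_{j=2}^{k+1}\hsfn{1}\circ\phi^j$ a.e. $[\mu_{k+1}]$, and substituting closes the induction. This is the same scheme as in the proof of Proposition \ref{work01}, except that only the $l=1$ part of condition (iii) of Theorem \ref{gary02} is used, which is precisely {\tt F}$_n$.

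The main point needing care is the null-set transfer. I must check that an equality holding a.e. $[\mu_k]$ between functions $f,g$ implies $f\circ\phi=g\circ\phi$ a.e. $[\mu_{k+1}]$, and that the identity from \cite[Lem. 1]{2026-arxiv-budzynski-01} is valid with $\efwn{k}$ well defined. Both require $\cfw\in\cfrak{k+1}$, so that $\hsf_k,\hsf_{k+1}<\infty$ a.e. Beyond this, I expect no obstacle: the products may contain zeros, but the induction only substitutes equalities and never divides, in contrast to Proposition \ref{bydle}.
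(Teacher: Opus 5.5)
Your proposal is correct and follows the paper's proof essentially step for step. The implication {\tt C}$_{n+1}\Rightarrow${\tt F}$_n$ comes from Proposition~\ref{bydle}, and the converse is proved by induction on $k$. The induction uses the identity $\hsf_{k+1}\circ\phi^{k+1}=\efwn{k}(\hsfn{1})\circ\phi\cdot(\hsf_k\circ\phi^{k+1})$ a.e.\ $[\mu_{k+1}]$, together with the transfer of $\mu_k$-null sets to $\mu_{k+1}$-null sets under $\phi$. The paper justifies that transfer by the absolute continuity $\mu_{k+1}\ll\mu_k\circ\phi^{-1}$, which is precisely what your observation $w_{k+1}=w\cdot(w_k\circ\phi)$ yields.
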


\begin{proof}
That {\tt C}$_{n+1}$ implies {\tt F}$_n$ follows from Proposition \ref{bydle}. For the proof of the reverse implication, we proceed by induction on $k$ in {\tt C}$_{n+1}$. The case $k=1$ holds trivially. Assume now that the equality holds for some $k \in J_n$, meaning
\begin{align}\label{eq:ind_hyp_fn}
    \hsf_k \circ \phi^k = \prod_{j=1}^k \hsfn{1} \circ \phi^j \quad \text{a.e. } [\mu_k].
\end{align}
By \cite[Lem. 1]{2026-arxiv-budzynski-01} and \cite[Lem. 5]{2018-lnim-budzynski-jablonski-jung-stochel}, we have the identity
\begin{align}\label{eq:fund_ident_fn}
    \hsf_{k+1} \circ \phi^{k+1} = \efwn{k}(\hsfn{1}) \circ \phi \cdot (\hsf_k \circ \phi^{k+1}) \quad \text{a.e. } [\mu_{k+1}].
\end{align}
Since $\mu_{k+1}$ is absolutely continuous with respect to $\mu_k \circ \phi^{-1}$ (see \cite[Lem. 26]{2018-lnim-budzynski-jablonski-jung-stochel}), using {\tt F}$_n$ we get
\begin{align*}
    \efwn{k}(\hsfn{1}) \circ \phi = \hsfn{1} \circ \phi \quad \text{a.e. } [\mu_{k+1}].
\end{align*}
Substituting this into \eqref{eq:fund_ident_fn} gives
\begin{align}\label{eq:fund_ident_sub_fn}
    \hsf_{k+1} \circ \phi^{k+1} = (\hsfn{1} \circ \phi) \cdot (\hsf_k \circ \phi^{k+1}) \quad \text{a.e. } [\mu_{k+1}].
\end{align}
Using the induction hypothesis \eqref{eq:ind_hyp_fn} and the absolute continuity of $\mu_{k+1}$ with respect to $\mu_k \circ \phi^{-1}$ once again, we can shift the functions to obtain
\begin{align*}
    \hsf_k \circ \phi^{k+1} = \prod_{j=1}^k \hsfn{1} \circ \phi^{j+1} \quad \text{a.e. } [\mu_{k+1}].
\end{align*}
Inserting this into \eqref{eq:fund_ident_sub_fn}, we conclude the induction step, which completes the proof.
\end{proof}

Deducing $\phi^{-k}(\ascr)$-measurability of $\hsfn{l}$ with $l$ other than $1$ from {\tt F}$_n$ is possible under some restrictions on $k$ and $l$, as shown in Proposition \ref{kupa01} below. However, it is possible to get $\efwn{k}(\hsf_l) = \hsf_l$ a.e. $[\mu_k]$ with $k, l$ not limited by any constraints whenever passing $n$ to $+\infty$ is possible, i.e. all {\tt C}$_n$ or {\tt F}$_n$, $n\in\nbb$, are satisfied.

\begin{pro}\label{kupa01}
Let $n\in\nbb$ and $\cfw \in \cfrak{n}$. Assume that {\tt F}$_n$ hold. Then $\efwn{k}(\hsf_l) = \hsf_l$ a.e. $[\mu_k]$ for all $k, l \in J_n$ such that $k+l \leqslant n+1$.
\end{pro}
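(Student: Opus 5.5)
The plan is to prove, by induction on $l$, that for fixed $k\in J_n$ the function $\hsf_l$ is $\phi^{-k}(\ascr)$-measurable relative to $\{w_k\neq 0\}$ whenever $k+l\Le n+1$. Here $w_m=p_{\phi,w}^{[m]}$. This relative measurability is equivalent to $\efwn{k}(\hsf_l)=\hsf_l$ a.e. $[\mu_k]$, by the same reasoning used in the proof of Theorem \ref{gary02}, (iii)$\Rightarrow$(i). For $l=1$ the claim is exactly {\tt F}$_n$, since $k\Le n$.

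The engine of the induction is a multiplicative recursion for $\hsf_{l+1}$. The identity from \cite[Lem. 1]{2026-arxiv-budzynski-01} that underlies \eqref{eq:fund_ident_fn} reads, before composing with $\phi^{l+1}$,
\begin{align*}
\hsf_{l+1}=\hsf_l\cdot\big(\efwn{l}(\hsfn{1})\circ\phi^{-l}\big)\quad\text{a.e. }[\mu].
\end{align*}
I intend to extract this form, or derive it directly from $\cfw^{l+1}=\cfwn{l}\cfw$ and the Radon-Nikodym definitions. Under {\tt F}$_n$ with $l\Le n$, there is a function $g_l$ with $\hsfn{1}=g_l\circ\phi^l$ a.e. on $\{w_l\neq0\}$. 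By definition of $\efwn{l}(\cdot)\circ\phi^{-l}$ we may take $g_l=\efwn{l}(\hsfn{1})\circ\phi^{-l}$, so the recursion becomes $\hsf_{l+1}=\hsf_l\, g_l$.

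For the inductive step, assume $k+l+1\Le n+1$, i.e. $k+l\Le n$. By the induction hypothesis, $\hsf_l$ is $\phi^{-k}(\ascr)$-measurable relative to $\{w_k\neq0\}$. It then suffices to show the same for $g_l$. Apply {\tt F}$_n$ with index $k+l\Le n$: this gives $\hsfn{1}=g_{k+l}\circ\phi^{k+l}$ a.e. on $\{w_{k+l}\neq0\}$. Comparing with $\hsfn{1}=g_l\circ\phi^l$ on $\{w_l\neq0\}$, we get $g_l\circ\phi^l=(g_{k+l}\circ\phi^k)\circ\phi^l$ a.e. $[\mu_{k+l}]$.

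We then want to push this down to $g_l=g_{k+l}\circ\phi^k$ a.e. on $\{w_k\neq0\}\cap\{\hsf_l>0\}$. The tools are \cite[Lem. 5]{2018-lnim-budzynski-jablonski-jung-stochel} together with absolute continuity of the pushforwards: $\mu_{k+l}\circ\phi^{-l}$ has density $\hsf_l$ relative to $\mu_k$. On the set $\{\hsf_l=0\}$ the product $\hsf_l g_l$ vanishes regardless of $g_l$, so it does not matter what $g_l$ is there. Hence $\hsf_{l+1}=\hsf_l\cdot(g_{k+l}\circ\phi^k)$ a.e. on $\{w_k\neq0\}$. Since $\hsf_l$ is $\phi^{-k}(\ascr)$-measurable relative to $\{w_k\neq0\}$ and $g_{k+l}\circ\phi^k$ is $\phi^{-k}(\ascr)$-measurable, the product $\hsf_{l+1}$ is $\phi^{-k}(\ascr)$-measurable relative to $\{w_k\neq0\}$, which completes the step.

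The main obstacle is this null-set bookkeeping: transferring an equality that holds a.e. $[\mu_{k+l}]$ after composition with $\phi^l$ into an equality a.e. $[\mu]$ on $\{w_k\neq0\}$. One must check that $\mu_k$-a.e. on $\{\hsf_l>0\}$ is controlled by $\mu_{k+l}\circ\phi^{-l}$, and handle the set $\{\hsf_l=0\}$ separately, as in the proof of Proposition \ref{bydle}. A secondary point is justifying the uncomposed recursion for $\hsf_{l+1}$. If it is not directly available, I would instead work throughout with the composed identity \eqref{eq:fund_ident_fn} and relative measurability of $\hsf_l\circ\phi^l$ on $\{w_{k+l}\neq0\}$, then descend using \cite[Lem. 5]{2018-lnim-budzynski-jablonski-jung-stochel}.
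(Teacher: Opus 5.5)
Your proposal is correct and follows essentially the same route as the paper: induction on $l$ via the recursion $\hsf_{l+1}=\hsf_l\cdot\big(\efwn{l}(\hsfn{1})\circ\phi^{-l}\big)$, comparison of the representations of $\hsfn{1}$ given by {\tt F}$_n$ at the indices $l$ and $k+l$, and a descent through $\phi^l$ that yields equality on $\{w_k\neq0\}\cap\{\hsf_l>0\}$. The one difference is minor: you let the factor $\hsf_l$ absorb the set $\{\hsf_l=0\}$, which is slightly more direct than the paper's step of representing $\chi_{\{\hsf_l>0\}}$ as $K\circ\phi^k$ on $\{w_k\neq0\}$ via the induction hypothesis.
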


\begin{proof}
The case $l=1$ corresponds to $\efwn{k}(\hsfn{1}) = \hsfn{1}$ a.e. $[\mu_k]$, which holds trivially for all $k \in J_n$ by {\tt F}$_n$.

Suppose the claim holds for $l \in J_{n-1}$, meaning $\efwn{k}(\hsf_l) = \hsf_l$ a.e. $[\mu_k]$ for all $k$ such that $k+l \leqslant n+1$. Fix $k$ such that $k+(l+1) \leqslant n+1$ (thus $k+l \leqslant n$). By \cite[Lem. 26]{2018-lnim-budzynski-jablonski-jung-stochel} and the induction hypothesis, we have
\begin{align}\label{eq:iv_iii_lucek}
    |w_k|^2 \hsf_{l+1} = |w_k|^2 \big(\esf_l(\hsfn{1}) \circ \phi^{-l}\big) \cdot \hsf_l =|w_k|^2 \big(\esf_l(\hsfn{1}) \circ \phi^{-l}\big) \cdot\efwn{k}(\hsf_l)\quad \text{a.e. } [\mu].
\end{align}
Since, by {\tt F}$_n$, $\efwn{l}(\hsfn{1}) = \hsfn{1}$ a.e. $[\mu_l]$, we have 
\begin{align}\label{work02}
    |w_l|^2 \hsfn{1} = |w_l|^2 (H_{l+1} \circ \phi^l) \quad \text{a.e. } [\mu],
\end{align}
with $H_{l+1} = \esf_l(\hsfn{1}) \circ \phi^{-l}$. Furthermore, since $k+l \leqslant n$, {\tt F}$_n$ guarantees that $\efwn{k+l}(\hsfn{1}) = \hsfn{1}$ a.e. $[\mu_{k+l}]$. Thus there exists an $\ascr$-measurable function $G_{k+l}$ such that
\begin{align}\label{work03}
    |w_{k+l}|^2 \hsfn{1} = |w_{k+l}|^2 (G_{k+l} \circ \phi^{k+l}) \quad \text{a.e. } [\mu].
\end{align}
Combining \eqref{work02} and \eqref{work03}, and using $w_{k+l} = w_l \cdot (w_k \circ \phi^l)$, we get
\begin{align*}
    |w_l|^2 \big(|w_k|^2 H_{l+1}\big) \circ \phi^l = |w_l|^2 \big(|w_k|^2 (G_{k+l} \circ \phi^k)\big) \circ \phi^l \quad \text{a.e. } [\mu].
\end{align*}
Applying \cite[Lemma 5]{2018-lnim-budzynski-jablonski-jung-stochel}, we obtain
\begin{align}\label{work04}
    \chi_{\{\hsf_l > 0\}} \cdot |w_k|^2 H_{l+1} = \chi_{\{\hsf_l > 0\}} \cdot |w_k|^2 (G_{k+l} \circ \phi^k) \quad \text{a.e. } [\mu].
\end{align}
By the induction hypothesis, $\efwn{k}(\hsf_l) = \hsf_l$ a.e. $[\mu_k]$. Therefore, $\{\hsf_l > 0\} \cap \{w_k \neq 0\} \in \phi^{-k}(\ascr) \cap \{w_k \neq 0\}$, which yields $|w_k|^2 \chi_{\{\hsf_l > 0\}} = |w_k|^2 (K \circ \phi^k)$ a.e. $[\mu]$ for some $\ascr$-measurable function $K$. Substituting this into \eqref{work04} gives
\begin{align*}
    \chi_{\{\hsf_l > 0\}} \cdot |w_k|^2 H_{l+1} = |w_k|^2 (K \circ \phi^k) \cdot (G_{k+l} \circ \phi^k) \quad \text{a.e. } [\mu].
\end{align*}
Returning to \eqref{eq:iv_iii_lucek}, and noting that $\hsf_l = \chi_{\{\hsf_l > 0\}} \hsf_l$, we can substitute the above identity to find
\begin{align*}
    |w_k|^2 \hsf_{l+1} = |w_k|^2 (K \circ \phi^k) \cdot (G_{k+l} \circ \phi^k) \cdot \efwn{k}(\hsf_l) \quad \text{a.e. } [\mu].
\end{align*}
%Since $\efwn{k}(\hsf_l)$ is $\phi^{-k}(\ascr)$-measurable on $\{w_k \neq 0\}$, all terms on the right-hand side are $\phi^{-k}(\ascr)$-measurable on $\{w_k \neq 0\}$. 
Thus, $\hsf_{l+1}$ is $\phi^{-k}(\ascr)$-measurable relative to $\{w_k \neq 0\}$. Consequently, $\efwn{k}(\hsf_{l+1}) = \hsf_{l+1}$ a.e. $[\mu_k]$, which, by induction, completes the proof.
\end{proof}

Another condition inspired by the bounded-case characterisation of centeredness is
\begin{itemize}
\item[{\tt G}$_n$:] for every $k\in J_n$, $\efwn{k}\big(\hsfn{1}\big)\circ \phi =\efwn{n+1}\big(\hsfn{1}\circ\phi \big)$ a.e. $[\mu_{n+1}]$.
\end{itemize}
This condition turns out to be equivalent to spectral weak $n$-centeredness (cf. Proposition \ref{wnc}).

\begin{pro}
Let $n \in \nbb$ and $\cfw \in \cfrak{n}$. Consider the following conditions:
\begin{itemize}
\item[(i)] $\efwn{n}(\hsfn{1}) = \hsfn{1}$ a.e. $[\mu_n]$,
\item[(ii)] $\efwn{n}(\hsfn{1}) \circ \phi = \efwn{n+1}(\hsfn{1} \circ \phi)$ a.e. $[\mu_{n+1}]$.
\end{itemize}
Then (i) and (ii) are equivalent.
\end{pro}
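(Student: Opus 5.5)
The plan is to prove both implications directly from the definition of the conditional expectations $\efwn{n}$ and $\efwn{n+1}$. The link between them will be the relation between the $\sigma$-algebras $\phi^{-n}(\ascr)$ and $\phi^{-(n+1)}(\ascr)=\phi^{-1}(\phi^{-n}(\ascr))$ and between the measures $\mu_n$ and $\mu_{n+1}$. The basic transfer tool is \cite[Lem. 5]{2018-lnim-budzynski-jablonski-jung-stochel} together with the absolute continuity $\mu_{n+1}\ll\mu_n\circ\phi^{-1}$ from \cite[Lem. 26]{2018-lnim-budzynski-jablonski-jung-stochel}. We also use the identity $w_{n+1}=w\cdot(w_n\circ\phi)$, so that $\D\mu_{n+1}=|w_n\circ\phi|^2\D\mu_1$. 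As in earlier proofs, \eqref{jana04} lets us assume $w\geqslant 0$.

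\textbf{(i)$\Rightarrow$(ii).} Assume $\efwn{n}(\hsfn{1})=\hsfn{1}$ a.e. $[\mu_n]$. Then there is an $\ascr$-measurable $g$ with $\hsfn{1}=g\circ\phi^n$ a.e. $[\mu_n]$. Composing with $\phi$ and using $\mu_{n+1}\ll\mu_n\circ\phi^{-1}$ gives $\hsfn{1}\circ\phi=g\circ\phi^{n+1}$ a.e. $[\mu_{n+1}]$. Hence $\hsfn{1}\circ\phi$ is $\phi^{-(n+1)}(\ascr)$-measurable relative to $\{w_{n+1}\neq0\}$, and so $\efwn{n+1}(\hsfn{1}\circ\phi)=\hsfn{1}\circ\phi=\efwn{n}(\hsfn{1})\circ\phi$ a.e. $[\mu_{n+1}]$, where the last equality again transfers (i) through $\phi$.

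\textbf{(ii)$\Rightarrow$(i).} This is the harder direction. It requires showing that $\hsfn{1}-\efwn{n}(\hsfn{1})$ vanishes $\mu_n$-a.e., knowing only that it is killed by $\efwn{n+1}$ after composition with $\phi$. I would use a variance-type argument. Set $e=\efwn{n}(\hsfn{1})$, truncate to $\min(\hsfn{1},m)$ if necessary, and take $\varDelta\in\ascr$ of finite measure. First, compute $\int (\hsfn{1}\circ\phi)^2\chi_{\varDelta}\circ\phi^{n+1}\D\mu_{n+1}$ via the change of variables $\int f\circ\phi\,\D\mu_{n+1}=\int f\,\hsf'\D\mu_n$. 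By \cite[Lem. 1]{2026-arxiv-budzynski-01} the relevant density is $\hsfn{1}$-related; more precisely, $\mu_{n+1}\circ\phi^{-1}$ has density $\efwn{n}(\hsfn{1})$ with respect to $\mu_n$ on $\phi^{-n}(\ascr)$-measurable sets. Second, condition (ii) says that $\hsfn{1}\circ\phi$ and $e\circ\phi$ have the same conditional expectations on $\phi^{-(n+1)}(\ascr)$. Pairing against $e\circ\phi\cdot\chi_\varDelta\circ\phi^{n+1}$, which is $\phi^{-(n+1)}(\ascr)$-measurable, gives
\begin{align*}
\int_{\phi^{-n}(\varDelta)} \hsfn{1}\, e\, \hsfn{1}\D\mu_n=\int_{\phi^{-n}(\varDelta)} e^2\, \hsfn{1}\D\mu_n .
\end{align*}
Both sides are pushed to $\mu_n$ with weight $\hsfn{1}$, since $\hsfn{1}\D\mu_n$ is the pushforward of $\mu_{n+1}$ restricted appropriately. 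Combining this with $\int_{\phi^{-n}(\varDelta)}\hsfn{1}\,e\D\mu_n=\int_{\phi^{-n}(\varDelta)} e^2\D\mu_n$, which follows from the definition of $e$, I aim to obtain
\begin{align*}
\int_{\phi^{-n}(\varDelta)} (\hsfn{1}-e)^2\,\rho\D\mu_n=0
\end{align*}
for a suitable positive weight $\rho$. Summing over a $\sigma$-finite exhaustion then yields $\hsfn{1}=e$ a.e. $[\mu_n]$ on $\{\rho>0\}$.

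The main obstacle is the bookkeeping of which weight appears and the set where it vanishes. A set like $\{\hsfn{1}=0\}$ plays the same role as in the proof of Proposition \ref{bydle}, and there I would repeat that argument to show $e=0$ a.e. $[\mu_n]$ on $\{\hsfn{1}=0\}$, with the truncation $\{e\leqslant m\}$ handling integrability.
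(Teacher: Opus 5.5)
Your (i)$\Rightarrow$(ii) is exactly the paper's argument. For (ii)$\Rightarrow$(i) you have the right mechanism: push $\mu_{n+1}$ forward along $\phi$, then test (ii) against $\phi^{-(n+1)}(\ascr)$-measurable functions. But the closing step is missing, and with the identities you list it does not close.

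Put $h=\hsfn{1}$, $e=\efwn{n}(h)$ and $\varOmega=\phi^{-n}(\varDelta)$. The image of $\mu_{n+1}$ under $\phi$ is $h\,\D\mu_n$ on all of $\ascr$; this follows directly from $\D\mu_{n+1}=|w|^2|w_n\circ\phi|^2\D\mu$. The measure $e\,\D\mu_n$ is only its restriction to $\phi^{-n}(\ascr)$, which is not what you need, since your integrands are not $\phi^{-n}(\ascr)$-measurable.

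Your two identities are $\int_\varOmega h^2e\,\D\mu_n=\int_\varOmega he^2\,\D\mu_n$ and $\int_\varOmega he\,\D\mu_n=\int_\varOmega e^2\,\D\mu_n$. Expanding $\int_\varOmega(h-e)^2\rho\,\D\mu_n$, they do not make it vanish for $\rho=1$, which would need $\int_\varOmega h^2\,\D\mu_n=\int_\varOmega he\,\D\mu_n$. Nor do they for $\rho=h$, which would need $\int_\varOmega h^3\,\D\mu_n=\int_\varOmega h^2e\,\D\mu_n$. So the ``suitable weight'' is never produced. Your expectation that the weight degenerates on $\{h=0\}$, to be treated as in Proposition \ref{bydle}, also points at the wrong set.

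There are two easy repairs. In both, choose $\varDelta$ with $\mu_n(\phi^{-n}(\varDelta))<\infty$; finite $\mu$-measure of $\varDelta$ is not enough, but such sets exhaust $X$ because $\hsfn{n}<\infty$ a.e. Set $\varOmega_m=\varOmega\cap\{e\Le m\}$.

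\emph{First repair (the paper's).} Test (ii) against the bare indicator of $\phi^{-1}(\varOmega_m)$. This gives $\int_{\varOmega_m}he\,\D\mu_n=\int_{\varOmega_m}h^2\,\D\mu_n$. Together with $\int_{\varOmega_m}e^2\,\D\mu_n=\int_{\varOmega_m}he\,\D\mu_n\Le m^2\mu_n(\varOmega_m)<\infty$, all terms are finite. Hence $\int_{\varOmega_m}(h-e)^2\,\D\mu_n=0$, with no weight at all.

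\emph{Second repair.} Keep your test function $(e\chi_{\varOmega_m})\circ\phi$. Pair it with the instance of the definition of $e$ with test function $e^2\chi_{\varOmega_m}$ rather than $e\chi_{\varOmega_m}$: $\int_{\varOmega_m}he^2\,\D\mu_n=\int_{\varOmega_m}e^3\,\D\mu_n\Le m^3\mu_n(\varOmega_m)$. Then $\int_{\varOmega_m}(h-e)^2e\,\D\mu_n=0$, so the weight is $\rho=e$. The exceptional set is therefore $\{e=0\}\in\phi^{-n}(\ascr)$, and on it $\int_{\varOmega_m\cap\{e=0\}}h\,\D\mu_n=\int_{\varOmega_m\cap\{e=0\}}e\,\D\mu_n=0$ immediately. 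No argument in the style of Proposition \ref{bydle} is needed.

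In both versions the truncation must be on $e$. Replacing $h$ by $\min(h,m)$ destroys both (ii) and the relation $e=\efwn{n}(h)$.
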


\begin{proof}
(i) $\Rightarrow$ (ii) The equality $\efwn{n}(\hsfn{1}) = \hsfn{1}$ a.e. $[\mu_n]$ implies 
\begin{align}\label{eq:shifted_F}
    \efwn{n}(\hsfn{1}) \circ \phi = \hsfn{1} \circ \phi \quad \text{a.e. } [\mu_{n+1}],
\end{align}
meaning that $\hsfn{1} \circ \phi$ is $\phi^{-(n+1)}(\ascr)$-measurable relative to $\{w_n \circ \phi \neq 0\}$. Because $\{w_{n+1} \neq 0\} \subseteq \{w_n \circ \phi \neq 0\}$, the function $\hsfn{1} \circ \phi$ is $\phi^{-(n+1)}(\ascr)$-measurable relative to $\{w_{n+1} \neq 0\}$. Therefore,
\begin{align*}
    \efwn{n+1}(\hsfn{1} \circ \phi) = \hsfn{1} \circ \phi \quad \text{a.e. } [\mu_{n+1}].
\end{align*}
Combining this with \eqref{eq:shifted_F} proves (ii).

(ii) $\Rightarrow$ (i) Because $\cfw \in \cfrak{n}$, the measure $\mu_n$ is $\sigma$-finite on $\phi^{-n}(\ascr)$. Let $\varDelta \in \ascr$ be such that $\mu_n(\phi^{-n}(\varDelta)) < \infty$. Define $\varOmega = \phi^{-n}(\varDelta) \in \phi^{-n}(\ascr)$. Since $\efwn{n}(\hsfn{1})$ is $\phi^{-n}(\ascr)$-measurable, for any $m \in \nbb$, the set $\varOmega_m := \varOmega \cap \{\efwn{n}(\hsfn{1}) \Le m\}$ belongs to $\phi^{-n}(\ascr)$. Put $A_m = \phi^{-1}(\varOmega_m) \in \phi^{-(n+1)}(\ascr)$. Then
\begin{align*}
    \int_{A_m} (\efwn{n}(\hsfn{1}) \circ \phi) \D\mu_{n+1} = \int_{A_m} \efwn{n+1}(\hsfn{1} \circ \phi) \D\mu_{n+1} = \int_{A_m} (\hsfn{1} \circ \phi) \D\mu_{n+1}.
\end{align*}
Using $\D\mu_{n+1} = |w_1|^2 |w_n \circ \phi|^2 \D\mu$, we get
\begin{align*}
    \int_{A_m} (\efwn{n}(\hsfn{1}) \circ \phi) \D\mu_{n+1} &= \int_{\phi^{-1}(\varOmega_m)} (\efwn{n}(\hsfn{1}) |w_n|^2) \circ \phi \, |w_1|^2 \D\mu = \int_{\varOmega_m} \efwn{n}(\hsfn{1}) \hsfn{1} \D\mu_n.
\end{align*}
Applying the same again yields
\begin{align*}
    \int_{A_m} (\hsfn{1} \circ \phi) \D\mu_{n+1} &= \int_{\phi^{-1}(\varOmega_m)} (\hsfn{1} |w_n|^2) \circ \phi \, |w_1|^2 \D\mu = \int_{\varOmega_m} \hsfn{1}^2 \D\mu_n.
\end{align*}
Comparing the two, we obtain
\begin{align*}
    \int_{C_m} \efwn{n}(\hsfn{1}) \hsfn{1} \D\mu_n = \int_{C_m} \hsfn{1}^2 \D\mu_n.
\end{align*}
From $\efwn{n}(\hsfn{1}) \Le m$ on $\varOmega_m$, we get
\begin{align*}
    \int_{\varOmega_m} \efwn{n}(\hsfn{1}) \hsfn{1} \D\mu_n \Le m \int_{\varOmega_m} \hsfn{1} \D\mu_n = m \int_{\varOmega_m} \efwn{n}(\hsfn{1}) \D\mu_n \Le m^2 \cdot \mu_n(\varOmega_m) < \infty.
\end{align*}
Therefore, $\int_{\varOmega_m} \hsfn{1}^2 \D\mu_n < \infty$. Since $\efwn{n}$ is an orthogonal projection and $\chi_{\varOmega_m} \efwn{n}(\hsfn{1})$ is $\phi^{-n}(\ascr)$-measurable, we have $\int_{\varOmega_m} \efwn{n}(\hsfn{1})^2 \D\mu_n = \int_{\varOmega_m} \efwn{n}(\hsfn{1}) \hsfn{1} \D\mu_n$. Hence\allowdisplaybreaks
\begin{align*}
    \int_{C_m} (\hsfn{1} - \efwn{n}(\hsfn{1}))^2 \D\mu_n &= \int_{C_m} \hsfn{1}^2 \D\mu_n - 2 \int_{C_m} \efwn{n}(\hsfn{1}) \hsfn{1} \D\mu_n + \int_{C_m} \efwn{n}(\hsfn{1})^2 \D\mu_n \\
    &= \int_{C_m} \hsfn{1}^2 \D\mu_n - 2 \int_{C_m} \hsfn{1}^2 \D\mu_n + \int_{C_m} \hsfn{1}^2 \D\mu_n = 0.
\end{align*}
This implies $\efwn{n}(\hsfn{1}) = \hsfn{1}$ a.e. $[\mu_n]$ on $\varOmega_m$. Applying standard measure-theoretic techniques yields $\efwn{n}(\hsfn{1}) = \hsfn{1}$ a.e. $[\mu_n]$.
\end{proof}

We will close the section with two elementary examples. The first one proves that {\tt C}$_n$ does not imply {\tt G}$_n$. The second one shows that spectral weak $n$-centeredness does not yield spectral $n$-centeredness.

\begin{exa}
Let $X = \{0\} \cup \{(i,j) \colon i \in \{1,2\}, j \in \{1,2,3\}\}$, let $\ascr = 2^X$, and let $\mu$ be the counting measure on $X$. Let $\phi \colon X \to X$ be given by $\phi(0) = 0$, $\phi(1,1) = \phi(2,1) = 0$, and 
\begin{align*}
    \phi(i, j) &= (i, j-1), \quad i \in \{1,2\}, \ j \in \{2,3\},
\end{align*}
and $w \colon X \to \rbb_+$ be given by $w(0) = 0$, $w(1,1) = w(2,1) = 1/\sqrt{2}$, $w(1,2) = w(2,2) = 1$, $w(1,3) = \sqrt{2}$, and $w(2,3) = \sqrt{3}$. Let $\cfw$ be the weighted composition operator induced by $\phi$ and $w$. The graph illustrating $\cfw$ is presented in Figure \ref{fig:cocycle_not_expectation}. Clearly, $\cfw$ is bounded. 

\begin{figure}[ht]
\begin{center}
\begin{tikzpicture}[scale=0.9, transform shape]
\tikzstyle{every node} = [circle,fill=gray!30]

\node (0)[font=\footnotesize, inner sep = 3pt, label={[fill=none]above:$0$}] at (0,0) {$0$};

\node (11)[font=\footnotesize, inner sep = 1pt, label={[fill=none]above:$\frac{1}{\sqrt{2}}$}] at (2,1.) {$(1,1)$};
\node (12)[font=\footnotesize, inner sep = 1pt, label={[fill=none]above:$1$}] at (4,1.) {$(1,2)$};
\node (13)[font=\footnotesize, inner sep = 1pt, label={[fill=none]above:$\sqrt{2}$}] at (6,1.) {$(1,3)$};

\node (21)[font=\footnotesize, inner sep = 1pt, label={[fill=none]above:$\frac{1}{\sqrt{2}}$}] at (2,-1.) {$(2,1)$};
\node (22)[font=\footnotesize, inner sep = 1pt, label={[fill=none]above:$1$}] at (4,-1.) {$(2,2)$};
\node (23)[font=\footnotesize, inner sep = 1pt, label={[fill=none]above:$\sqrt{3}$}] at (6,-1.) {$(2,3)$};

\draw[<-] (0) --(11);
\draw[<-] (11) --(12);
\draw[<-] (12) --(13);

\draw[<-] (0) --(21);
\draw[<-] (21) --(22);
\draw[<-] (22) --(23);

\draw[<-] (0) to[out=135, in=225, looseness=6] (0);
\end{tikzpicture}
\end{center}
\caption{\label{fig:cocycle_not_expectation} The graph related to a \emph{wco} satisfying {\tt C}$_2$ but failing {\tt G}$_2$.}
\end{figure}

Using $\hsfn{1}(x) = \sum_{y \in \phi^{-1}(\{x\})} |w(y)|^2$ and $\hsf_2(x) = \sum_{y \in \phi^{-2}(\{x\})} |w_2(y)|^2$, where $w_2=w\cdot w\circ \phi$, we obtain
\begin{align}\label{ye02}
    \hsfn{1}(0) = \hsfn{1}(1,1) = \hsfn{1}(2,1) = 1,\quad \hsfn{1}(1,2) = 2,\quad \hsfn{1}(2,2) = 3,
\end{align}
and
\begin{align}\label{ye03}
    \hsf_2(0) = 1,\quad \hsf_2(1,1) = 2,\quad \hsf_2(2,1) = 3.
\end{align}
Then $\{ w_2\neq 0\}=\{(1,2), (2,2), (1,3), (2,3)\}$. Using this, \eqref{ye02}, and \eqref{ye03}, we easily verify that $\hsf_2 \circ \phi^2 = (\hsfn{1} \circ \phi) \cdot (\hsfn{1} \circ \phi^2)$ holds on $\{w_2\neq 0\}$, which means that ({\tt C}$_2$) is satisfied.

On the other hand $\{w_3\neq0\}=w\cdot w\circ \phi \cdot w\circ\phi^2$ is equal to $\{(1,3), (2,3)\}$. Moreover, we have\allowdisplaybreaks
\begin{align*}
    \efwn{2}(\hsfn{1})(1,2) &= \frac{\hsfn{1}(1,2)\mu_2(\{(1,2)\}) + \hsfn{1}(2,2)\mu_2(\{(2,2)\})}{\mu_2(\{(1,2)\}) + \mu_2(\{(2,2)\})} = \frac{5}{2},
\end{align*}
and
\begin{align*}
    \efwn{3}(\hsfn{1} \circ \phi)(1,3) &= \frac{\hsfn{1}(1,2)\mu_3(\{(1,3)\}) + \hsfn{1}(2,2)\mu_3(\{(2,3)\})}{\mu_3(\{(1,3)\}) + \mu_3(\{(2,3)\})} = \frac{13}{5}.
\end{align*}
Consequently, $\efwn{2}(\hsfn{1}) \circ \phi = \efwn{3}(\hsfn{1} \circ \phi)$ a.e. $[\mu_3]$ does not hold, i.e., ({\tt G}$_2$) is not satisfied.
\end{exa}

%\begin{exa}
%Let $X = \{0, 1, 2, 3, 4\}$ equipped with the counting measure $\mu$. Let $\phi \colon X \to X$ be defined by $\phi(0)=0$, $\phi(1)=\phi(2)=0$, $\phi(3)=1$, and $\phi(4)=2$. Let the weight $w \colon X \to \rbb_+$ be given by $w(0)=0$, $w(1)=w(2)=w(3)=1$, and $w(4)=\sqrt{2}$. 

%Since $n=1$, the cocycle condition ${\tt C}_1$ reduces to the trivial identity $\hsfn{1} \circ \phi = \hsfn{1} \circ \phi$, which is automatically satisfied. Thus, $\cfw$ satisfies ${\tt C}_1$.

%We now check condition ${\tt F}_1$, which requires $\efwn{1}(\hsfn{1}) = \hsfn{1}$ a.e. $[\mu_1]$. The support of $w_1 = w$ is $\{1, 2, 3, 4\}$. We evaluate the Radon--Nikodym derivative $\hsfn{1}$ at the siblings $1$ and $2$:
%\begin{align*}
%    \hsfn{1}(1) &= \sum_{y \in \phi^{-1}(\{1\})} |w(y)|^2 = |w(3)|^2 = 1, \\
%    \hsfn{1}(2) &= \sum_{y \in \phi^{-1}(\{2\})} |w(y)|^2 = |w(4)|^2 = 2.
%\end{align*}
%Both $1$ and $2$ belong to the fiber $\phi^{-1}(\{0\})$ and both have non-zero weight. Since $\hsfn{1}(1) \neq \hsfn{1}(2)$, the function $\hsfn{1}$ is not constant on the sets of the form $\phi^{-1}(\{x\})$. Consequently, $\efwn{1}(\hsfn{1}) = \hsfn{1}$ a.e. $[\mu_1]$ fails. 

%Therefore, $\cfw$ satisfies ${\tt C}_1$, but fails ${\tt F}_1$ (and by equivalence, fails ${\tt G}_1$ and is not spectrally weakly 1-centered).
%\end{exa}

\begin{exa}
Let $X = \{0, 1, 2, 3, 4, 5, 6, 7, 8, 9\}$, let $\ascr = 2^X$, and let $\mu$ be the counting measure. We define $\phi \colon X \to X$ and $w\colon X\to \cbb$ according to the assignment drawn in Figure \ref{fig:weak_not_full}. 

\begin{figure}[ht]
\begin{center}
\begin{tikzpicture}[scale=0.9, transform shape]
\tikzstyle{every node} = [circle,fill=gray!30]

\node (0)[font=\footnotesize, inner sep = 3pt, label={[fill=none]above:$0$}] at (0,0) {$0$};

\node (1)[font=\footnotesize, inner sep = 3pt, label={[fill=none]above:$1$}] at (1.5, 1) {$1$};
\node (2)[font=\footnotesize, inner sep = 3pt, label={[fill=none]above:$1$}] at (1.5,-1) {$2$};

\node (3)[font=\footnotesize, inner sep = 3pt, label={[fill=none]above:$1$}] at (3, 1) {$3$};
\node (4)[font=\footnotesize, inner sep = 3pt, label={[fill=none]above:$1$}] at (3,-1) {$4$};

\node (5)[font=\footnotesize, inner sep = 3pt, label={[fill=none]above:$1$}] at (4.5, 1) {$5$};
\node (6)[font=\footnotesize, inner sep = 3pt, label={[fill=none]above:$1$}] at (4.5,-1) {$6$};

\node (7)[font=\footnotesize, inner sep = 3pt, label={[fill=none]above:$1$}] at (6, 1) {$7$};
\node (8)[font=\footnotesize, inner sep = 3pt, label={[fill=none]above:$1$}] at (6,-0.5) {$8$};
\node (9)[font=\footnotesize, inner sep = 3pt, label={[fill=none]above:$1$}] at (6,-1.5) {$9$};

\draw[<-] (0) --(1);
\draw[<-] (0) --(2);
\draw[<-] (1) --(3);
\draw[<-] (2) --(4);
\draw[<-] (3) --(5);
\draw[<-] (4) --(6);
\draw[<-] (5) --(7);
\draw[<-] (6) --(8);
\draw[<-] (6) --(9);

% Self-loop pointing leftwards
\draw[<-] (0) to[out=135, in=225, looseness=6] (0);
\end{tikzpicture}
\end{center}
\caption{\label{fig:weak_not_full} The graph related to a \emph{wco} that is weakly 2-centered but not 2-centered.}
\end{figure}

Since $\hsfn{1}(x) = \card{\phi^{-1}(\{x\})}$, we $\hsfn{1}(6) = 2$, $\hsfn{1}(0) = 2$, $\hsfn{1}(7)=\hsfn{1}(8)=\hsfn{1}(9)=0$, and $\hsfn{1}(x) = 1$ for all other $x$.

We first check that $\cfw$ is spectrally weakly 2-centered. By Proposition \ref{wnc}, this is equivalent to condition {\tt F}$_2$, which requires $\hsfn{1}$ to be constant on sets of the form $\phi^{-1}(\{x\})$ and $\phi^{-2}(\{x\})$. Easily, this is satisfied. Thus, $\cfw$ is spectrally weakly 2-centered.

For spectral 2-centeredness $\efwn{2}(\hsf_2) = \hsf_2$ a.e. $[\mu_2]$ must hold. Using $\hsf_2(x) = \card{\phi^{-2}(\{x\})}$ we get
\begin{align*}
    \hsf_2(3) = 1 \quad \text{ and }\quad \hsf_2(4)= 2.
\end{align*}
Hence, $\hsf_2$ is not constant on $\phi^{-2}(0)$, which means that $\efwn{2}(\hsf_2) = \hsf_2$ a.e. $[\mu_2]$ fails to hold. This shows that $\cfw$ is not spectrally 2-centered.
\end{exa}

\section{Final remarks}
This paper focused on the core structural properties and characterisations of spectrally $n$-centered weighted composition operators. Building on these results, in a forthcoming sequel, we will provide a characterisation of spectrally centered weighted composition operators. Moreover, we will apply our findings to the domains of Aluthge transforms and reciprocals (in particular, to Moore-Penrose inverses) of unbounded weighted composition operators, known to be instrumental for research in the bounded case.

\section*{Acknowledgments}
I wish to express my deep gratitude to Professor Marek Ptak and his wife for their hospitality and support during and beyond preparation of this paper. 

\section*{Statements and declarations}
This work was supported by the Ministry of Science and Higher Education of the Republic of Poland. The author declares no competing interests. No datasets were generated or analysed during the current research.

\bibliographystyle{amsalpha}

\end{document}